\documentclass[11pt]{article}
\usepackage{amsmath, amsfonts, amssymb}
\usepackage{graphicx, amsthm,color}
\usepackage{mathrsfs,dsfont}  

\usepackage[T1]{fontenc}
\DeclareUnicodeCharacter{0141}{\L{}}
\usepackage{comment}
\usepackage[numbers,sort&compress]{natbib}
\usepackage{hyperref}

%%===========================%%
%%      Tickz               								
%%===========================%%

\usepackage{tikz}
\usepackage{tikz-3dplot}
\usetikzlibrary{calc,patterns,angles,quotes}
\usetikzlibrary{babel}
\usepackage{pgfplots}
\usepgfplotslibrary{patchplots}
\usetikzlibrary{backgrounds, intersections}
\usepgfplotslibrary{fillbetween}
\usetikzlibrary{arrows,automata}
\usepackage{subcaption}

\pgfplotsset{compat=1.18}
\usepgfplotslibrary{colorbrewer}

\setlength{\textheight}{220mm} \setlength{\textwidth}{162mm}
\setlength{\oddsidemargin}{1.9mm} \setlength{\evensidemargin}{1.9mm}
\setlength{\topmargin}{0mm}
\parindent = 0mm
\parskip = 3pt

\newtheorem{theorem}{Theorem}[section]
\newtheorem{lemma}[theorem]{Lemma}
\newtheorem{definition}[theorem]{Definition}
\newtheorem{corollary}[theorem]{Corollary}
\newtheorem{proposition}[theorem]{Proposition}

\theoremstyle{definition}
\newtheorem{remark}[theorem]{Remark}

\usepackage[utf8]{inputenc}

\newcommand{\HH}{\mathcal{H}}
\newcommand{\N}{\mathds{N}}

\newcommand{\R}{\mathds{R}}
\newcommand{\Sph}{\mathds{S}}

\newcommand{\cco}{\overline{\textup{co}}}
\newcommand{\co}{\textup{co}}
\newcommand{\dist}{\textup{dist}}

\DeclareMathOperator{\argmin}{argmin}

\DeclareMathOperator{\dom}{dom}
\DeclareMathOperator{\Id}{Id}

\newcommand{\tto}{\rightrightarrows}

\newcommand{\proj}{\mathrm{proj}}

\date{\today}
\title{Determination of (unbounded) convex functions via Crandall-Pazy directions} 
\author{Aris Daniilidis, David Salas, Sebasti\'{a}n Tapia-Garc\'{i}a}

\begin{document}

\maketitle

\begin{abstract}
\noindent It has been recently discovered that a convex function can be determined by its slopes and its infimum value, provided this latter is finite. The result was extended to nonconvex functions by replacing the infimum value by the set of all critical and asymptotically critical values. In all these results boundedness from below plays a crucial role and is generally admitted to be a paramount assumption. Nonetheless, this work develops a new technique that allows to also determine a large class of unbounded from below convex functions, by means of a Neumann-type condition related to the Crandall-Pazy direction.

\end{abstract}
%\noindent \textbf{Key words:} Linear operators, differentiability of Lipschitz functions, bornology, weakly compact operators, completely continuous operators.\\
%\noindent \textbf{MSC 2020:} Primary: 
%46A17, 26A16, 47B07. Secondary: 47B38, 49J50.
\tableofcontents
\setlength{\parindent}{0pt}

\begin{comment}
\begin{itemize}
    \item Introduction
    \item Crandall-Pazy directions
    \begin{itemize}
        \item Definitions and state-of-the-art.
        \item Relation with slopes and gradient flows.
        \item Relation with Thom's conjecture at infinity.
    \end{itemize}
    \item Determination of convex function functions
    \begin{itemize}
        \item $p_f$ is attained.
        \item General case for $C^{1,1}_{\mathrm{loc}}$ functions.
    \end{itemize}
\end{itemize}
\end{comment}

\section{Introduction}\label{sec:intro}

In 2018, an unexpected result at the time was presented in \cite{BCD2018}: For every two convex functions $f,g:\HH\to\R$ over a Hilbert space $\HH$ that are bounded from below and $\mathcal{C}^2$-smooth, one has that
\begin{equation}\label{eq:FirstDeterminationResult}
\forall x\in\HH,\,\,\|\nabla f(x)\| = \|\nabla g(x)\| \quad\implies\quad f= g + \text{cst}.
\end{equation}
In other words, only scalar first-order information (the norm of the gradient) is needed to determine such functions up to an additive constant. This result was extended in \cite{PSV2021} to convex functions over Hilbert spaces that are bounded from below and lower semicontinuous. The scalar first-order information is replaced by the distance of zero to the convex subdifferential which coincides in the convex case with the (metric) slope. The slope of a function $f:\HH\to\R\cup\{+\infty\}$  at a point $x\in\HH$ is given by
\begin{equation}\label{eq:slope-Def}
    s_f(x) = \limsup_{y\to x} \frac{\max\{ f(x) - f(y), 0 \}}{d(x,y)}.
\end{equation}
This concept was first introduced in \cite{GMT1980} to study gradient flow dynamics in metric spaces. The determination result of \cite{PSV2021} can be written as follows: For every two convex functions $f,g:\HH\to\R$ over a Hilbert space $\HH$ that are bounded from below and continuous one has that
\begin{equation}\label{eq:FirstDeterminationResult-slope}
\forall x\in\HH,\,\,s_f(x) = s_g(x) \quad\implies\quad f= g + \text{cst}.
\end{equation}
The idea of the proof of \cite{PSV2021} is very simple: First, for a given point $x\in \HH$, one considers the curve $\gamma:[0,+\infty)\to \HH$ given by the (unique) solution of the differential inclusion
\begin{equation}\label{eq:SubgradientFlow-Intro}
\begin{cases}
    \dot{\gamma}(t) \in - \partial f(\gamma(t)),\quad \text{a.e. }t\geq 0,\\
    \gamma(0) = x.
\end{cases}
\end{equation}
Then, convexity of $f$ and $g$, together with the fact that $\int_{0}^{+\infty} s_g(\gamma (t))^2dt = \int_{0}^{+\infty} s_f(\gamma (t))^2dt <+\infty$ entail that $f(\gamma(t))\to \inf f$ and $g(\gamma(t))\to \inf g$ (see, e.g., the discussion in \cite{DLS2024}). Then, a simple computation using chain rule of convex functions (see, e.g., \cite[Proposition 17.2.5]{ABM2014-book}) together with Cauchy-Schwartz inequality, yields that
\[
\frac{d}{dt} (f-g)(\gamma(t)) \leq s_g(x)^2 - s_f(x)^2 = 0.
\]
One concludes that $f-g$ is nonincreasing along $\gamma$, and consequently that $f(x) \geq g(x) + \inf f - \inf g$. The reverse inequality follows by exchanging the roles of $f$ and $g$ in the development. In a nutshell: we follow the subgradient flow of $f$ to conclude that $f \geq g + \inf f - \inf g$, and then we follow the subgradient flow of $g$ to conclude the reverse inequality. This works since the subgradient flow of $f$ (respectively the subgradient flow of $g$) brings both $f$ and $g$  to their infimal values, whether these values are attained in $\HH$ or are asymptotically reached ``at infinity''.

The determination result of \cite{PSV2021} was extended to arbitrary Banach spaces in \cite{TZ2023}. In the nonconvex setting, similar results using the metric slope were derived for continuous functions in metric spaces: In \cite{DS2022}, functions are considered to be inf-compact and a boundary condition on the set of critical points where the slope is zero is added; In \cite{DLS2024}, the result is derived for continuous functions in complete metric spaces by including also boundary conditions at asymptotically critical sequences; In \cite{DMS2024,DLS2024} the determination result is also extended to the case where the slope is replaced by an abstract notion of scalar first-order information, called \textit{descent modulus}. In \cite{Vilches2021}, a similar result to~\eqref{eq:FirstDeterminationResult-slope} is obtained using proximal operators; In \cite{IvanovZlateva2025} alternative proofs of the determination results of \cite{DMS2024,DLS2024} are derived, based on Ekeland's Variational Principle. Stability results for the slope have been recently investigated in \cite{DD2023, DST2024, LT2024,PTZ2025}.

In all these results, the hypothesis of boundedness from below has been paramount. In the convex case, the constant of~\eqref{eq:FirstDeterminationResult} and~\eqref{eq:FirstDeterminationResult-slope} is in fact the difference of the infimum values, $\inf f - \inf g$. The simple counterexample $f(t) = t$ and $g(t) = -t$ shows that~\eqref{eq:FirstDeterminationResult-slope} cannot holds for unbounded convex functions, even in the one-dimensional case.

Notice that~\eqref{eq:FirstDeterminationResult-slope} can be read as a uniqueness result: for every two convex functions $f,g:\HH\to\R$ over a Hilbert space $\HH$ that are bounded from below and lower semicontinuous, one has that
\begin{equation}\label{eq:DeterminationAsUniqueness}
\left.\begin{array}{r}
s_f(x) = s_g(x),\,\,\forall x\in\HH, \\
\inf f = \inf g
\end{array}\right\} \quad\implies\quad f= g.
\end{equation} 
That is, for a lower semicontinuous function $\ell:\mathcal{H}\to \R_+\cup\{+\infty\}$ and a constant $c\in\R$, there is at most one convex lower semicontinuous solution to the eikonal equation
\begin{equation}\label{eq:Eikonal-with-Inf}
\begin{cases}
    s_u(x) = \ell(x),\quad x\in \HH,\\
    \inf u = c.
\end{cases}
\end{equation}
Eikonal equations and Hamilton-Jacobi equations using slopes have been recently studied in \cite{Gangbo2015,LP2025,LSZ2021,LSZ2025,LZ2023} to extend the theory of viscosity solutions to metric spaces. In our setting, however, the interesting flavor of~\eqref{eq:Eikonal-with-Inf} is that the domain of the equation is the whole space and the value $c=\inf u$ is playing the role of a Dirichlet-type boundary condition at infinity: the value at which all the subgradient flows converge.

The goal of this work is to replace the Dirichlet-type boundary condition $\inf u = c$ by another boundary condition at infinity that also captures the case of unbounded convex functions. We achieve this by studying the gradient flow of convex functions. We note that whenever $\gamma:[0,+\infty)\to\HH$ is a subgradient curve of a convex function $f$ (i.e., solution of~\eqref{eq:SubgradientFlow-Intro} for some $x\in\mathcal{H}$), then $\gamma$ verifies that
\begin{equation}\label{eq:Convergence-SubgradientFlow}
\dot{\gamma}(t) \xrightarrow{t\to +\infty} -p_f,
\end{equation}
where $p_f$ is a unique vector associated to the range of the subdifferential $\partial f$. The vector $p_f$ relates to the seminal works of Crandall and Pazy \cite{cp1969,Pazy1971,p1978} on the asymptotic behavior of semigroups of contractions and will be further called \textit{the Crandall-Pazy direction} associated to~$f$ (see Definition~\ref{def:Crandall-PazyDir} below). We then use this direction to study the following question: Let $f,g:\HH\to\R\cup\{+\infty\}$ be two convex and lower semicontinuous functions (not necessarily bounded from below). Does it hold

\begin{equation}\label{eq:Conjecture-DeterminationNeumann}
\left.\begin{array}{r}
s_f(x) = s_g(x),\,\,\forall x\in\HH, \\
p_f = p_g
\end{array}\right\} \quad{\implies}\quad f= g +\text{cst}~~?
\end{equation}
Note that, whenever implication~\eqref{eq:Conjecture-DeterminationNeumann} holds, the Crandall-Pazy direction $p_f$ is acting as a Neumman-type condition at infinity, replacing the Dirichlet-type paradigm of the literature that was restricting (up to now) the whole study to the class of functions that are bounded from below.

The main result of this work is to establish~\eqref{eq:Conjecture-DeterminationNeumann} for convex functions of class $\mathcal{C}^{1,1}_{\rm loc}$ (Theorem~\ref{thm:DeterminationC11-General}). We also show that the result holds for convex lower semicontinuous functions for which the Crandall-Pazy direction is attained (Proposition~\ref{prop:Determination-nonsmooth}). It is still unknown if~\eqref{eq:Conjecture-DeterminationNeumann} holds in full generality.

\subsection{Preliminaries}
In what follows, we will always work on a Hilbert space $\HH$, with inner product $\langle\cdot,\cdot\rangle$ and induced norm $\|\cdot\|$.

We assume that the reader is familiar with convex subdifferentials and maximally monotone operators. We refer the reader to \cite{Brezis1973Operateurs,Phelps1993Convex} for a detailed exposition. For a maximally monotone operator $A:\HH\tto \HH$ we denote its range by $A(\HH)$ and its domain by $\dom(A)$. In the particular case of the convex subdifferential of a convex function $\partial f:\HH\tto\HH$, we denote by $\partial^{\circ} f(x)$ the element of minimal norm of $\partial f(x)$, whenever $\partial f(x)$ is nonempty. That is,
\[
\forall x\in \dom (\partial f),\quad \partial^{\circ} f(x) := \proj_{\partial f(x)}(0).
\]

We will also assume that the reader is familiar with the theory of nonexpansive semigroups, which will be used in Section~\ref{sec:CP-Dir}, as well as with the theory of generalized derivatives and Jacobians in the sense of Clarke, which will be used in Section~\ref{sec:Determination}. We refer the reader to \cite{Brezis1973Operateurs} for the former, and to \cite{Clarke1983Optimization} for the latter. To ease the presentation, in the aforementioned sections, we will recall the main elements we will use.

%%%%%%%%%%%%%%%%%%%%%%%%%%%%%%%%%%%%%%%%%%%%%%%%%%%%%%%%%%%%%%%%%
%%%%%%%%%%%%%%%%%%%%%%%%%%%%%%%%%%%%%%%%%%%%%%%%%%%%%%%%%%%%%%%%%
%%%%%%%%%%%%%%%%%%%%%%%%%%%%%%%%%%%%%%%%%%%%%%%%%%%%%%%%%%%%%%%%%
%\newpage
\section{Crandall-Pazy Directions}\label{sec:CP-Dir}
Recall that a nonlinear semigroup $S$ over a convex domain $D\subset \HH$ is a family $(S(t))_{t\geq 0}$ of operators over $D$ onto itself satisfying
\begin{enumerate}
    \item $S(0) = \Id$.
    \item $S(t)S(s) = S(t+s)$, for all $t,s\geq 0$.
    \item For every $x\in D$, the map $t\mapsto S(t)x$ is continuous.
\end{enumerate}
The semigroup $S$ is said to be nonexpansive if every $S(t)$ is nonexpansive, that is,
\begin{equation}\label{eq:Contraction}
\|S(t)x-S(t)y\|\leq \|x-y\|, \forall x,y\in D.
\end{equation}
It is well-known (see \cite{cp1969}) that any nonexpansive semigroup $S$ admits a unique maximally monotone (set-valued) operator $A$ as generator: that is, $A$ is the unique maximally monotone operator on $\HH$ verifying that $\overline{\dom A} = D$ and that the curve $t\mapsto S(t)x$ (with $x\in D$) can be constructed as the unique solution of the differential inclusion
\begin{equation}\label{eq:DiffInclusion-MMOp}
\begin{cases}
    \dot{\gamma}(t) \in -A(\gamma(t)),\quad \forall \,\,\text{a.e.}\, \,t\geq 0,\\
    \gamma(0) = x.
\end{cases}
\end{equation}
Let us now consider  $f:\HH\to \R\cup\{+\infty\}$ be a lower semicontinuous convex function. It is well-known that for every $x\in \overline{\dom f}$ there exists a unique absolutely continuous curve ${\gamma_x:[0,+\infty)\to\HH}$ solving the differential inclusion
\begin{equation}\label{eq:SubgradientFlow}
\begin{cases}
    \dot{\gamma}(t) \in -\partial f(\gamma(t)),\quad \forall \,\,\text{a.e.}\, \,t\geq 0,\\
    \gamma(0) = x.
\end{cases}    
\end{equation}
Using these curves, we can construct a nonexpansive semigroup $S_f:=(S_f(t))_{t\geq 0}$ over the domain $D=\overline{\dom f}$, given by $S_f(t)x := \gamma_x(t)$. In this case, the maximally monotone operator $\partial f$ acts as the \emph{generator} of the semigroup. The semigroup $S_f$ is called the \emph{subgradient flow semigroup} of $f$ and the curve $\gamma_x:t\mapsto S_f(t)x$ the respective \emph{subgradient curve} of $f$ emanating from $x\in \overline{\dom f}$. This is a standard construction, and we refer the reader to \cite{Brezis1973Operateurs} or to \cite[Chapter 17]{ABM2014-book} for a modern presentation.

In a series of papers \cite{cp1969,Pazy1971,p1978}, Crandall and Pazy studied the asymptotic behavior of contractions. In particular, in \cite[Theorem~3.9]{p1978} it is shown that for any nonexpansive semigroup $S$ over a convex domain $D\subset \HH$ with generator $A$, one has that
\begin{equation}\label{eq:Pazy-convergence}
    \forall x\in D,\quad \frac{S(t)x}{t} \xrightarrow{t\to+\infty} - p,
\end{equation}
where $p$ is the unique element of minimal norm of the closed convex set $\overline{A(\HH)}$ (the convexity of $\overline{A(\HH)}$ is shown in \cite[Th\'{e}or\`{e}me~2.2]{Brezis1973Operateurs}). The above result of~\eqref{eq:Pazy-convergence} can also be found in an earlier paper by S.~Reich, see \cite[Theorem~3.4]{Reich1974}. The fact that the semigroup $S_f$ fits the setting of the result motivates the following definition. 
\begin{definition}[Crandall-Pazy direction]\label{def:Crandall-PazyDir} Let $f:\mathcal{H}\to\R\cup\{+\infty\}$ be a lower semicontinuous convex function. Recalling that $\overline{\partial f(\HH)}$ is convex, we define the \emph{Crandall-Pazy direction of $f$} as 
    \begin{equation}\label{eq:CP-direction}
     p_f := \mathrm{proj}_{\overline{\partial f(\HH)}}(0).   
    \end{equation} 
In particular, the Crandall-Pazy direction of $f$ verifies that  
\[
\frac{S_f(t)x}{t}\xrightarrow{t\to\infty} -p_f
\]
for every $x~\in~\overline{\dom f}$, where $S_f$ is the subgradient flow semigroup of $f$.    
\end{definition}

\subsection{Relation with slopes and descent sequences}
The slope of a lower semicontinuous convex function $f:\HH\to \R\cup\{+\infty\}$ is characterized as follows:
\begin{equation}\label{eq:slope-distance}
    s_f(x) = \dist(0,\partial f(x)) = \|\partial^{\circ}f(x)\|,
\end{equation}
under the convention that $\dist(0,\emptyset) = +\infty$. It is well known (see, e.g., \cite{AzeCorvellec2004}) that convexity entails that $s_f$ is lower semicontinuous. This yields that
\begin{equation}\label{eq:Inf-sf=norm-pf}
    \inf s_f = \inf\{ \|x^*\|\ :\ x^* \in \partial f(\HH) \} = \|p_f\|.
\end{equation}
Whenever $f$ is bounded from below, any subgradient curve $\gamma$ emanating from some $x\in \overline{\dom f}$ verifies that
\[
s_f(\gamma(t)) = \|\partial^{\circ} f(\gamma(t))\|\xrightarrow{t\to+\infty} 0 = \inf s_f.
\]
The next proposition shows that this convergence holds also for unbounded functions.

\begin{proposition} \label{prop: nonincreasing speed}
    Let $x\in \overline{\textup{dom}}\,f$ and let $\gamma:[0,+\infty)\to \HH$  be the subgradient curve emanating from $x$.
    Then the map $t\mapsto \|\partial^\circ f(\gamma(t))\|$ is nonincreasing and $\|\partial^\circ f(\gamma(t))\|\xrightarrow{t\to+\infty} \inf s_f$.
\end{proposition}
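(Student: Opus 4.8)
The plan is to write $u(t) := \partial^\circ f(\gamma(t)) = -\dot{\gamma}^+(t)$ for $t>0$, recalling that the subgradient curve is a strong solution whose right derivative selects the minimal-norm subgradient, and to treat the two assertions separately. The \emph{monotonicity} I would obtain purely from the nonexpansiveness of the semigroup $S_f$. Fix $0\le s\le t$ and $h>0$; since $\gamma(t+h)=S_f(t-s)\gamma(s+h)$ and $\gamma(t)=S_f(t-s)\gamma(s)$, the contraction property~\eqref{eq:Contraction} gives
\[
\|\gamma(t+h)-\gamma(t)\|=\|S_f(t-s)\gamma(s+h)-S_f(t-s)\gamma(s)\|\le\|\gamma(s+h)-\gamma(s)\|.
\]
Dividing by $h$ and letting $h\to0^+$ yields $\|\dot{\gamma}^+(t)\|\le\|\dot{\gamma}^+(s)\|$, i.e.\ $t\mapsto\|u(t)\|$ is nonincreasing. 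In particular the limit $L:=\lim_{t\to+\infty}\|u(t)\|$ exists, and since $\|u(t)\|=s_f(\gamma(t))\ge\inf s_f=\|p_f\|$ by~\eqref{eq:slope-distance}--\eqref{eq:Inf-sf=norm-pf}, we get $L\ge\|p_f\|$.

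It remains to prove $L\le\|p_f\|$, and \textbf{this is the main obstacle}: both the monotone convergence $\|u(t)\|\to L$ and the Crandall--Pazy convergence $\gamma(t)/t\to-p_f$ only produce, via Jensen's inequality applied to $\tfrac1t\int_0^t u$, the inequality $\|p_f\|\le L$; the reverse bound cannot be read off from norms alone and must exploit the \emph{direction} of the flow together with the energy balance. Here I would use the classical energy identity $\tfrac{d}{dt}f(\gamma(t))=-\|u(t)\|^2$ (valid for a.e.\ $t>0$), which gives $f(\gamma(2t))-f(\gamma(t))=-\int_t^{2t}\|u(\tau)\|^2\,d\tau$, and I would compare the two ``far apart'' points $\gamma(t)$ and $\gamma(2t)$ along the curve rather than $\gamma(0)$ and $\gamma(t)$.

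The key step is the subgradient inequality $f(\gamma(2t))\ge f(\gamma(t))+\langle u(t),\gamma(2t)-\gamma(t)\rangle$, which combined with the energy identity and division by $t>0$ gives
\[
\Big\langle u(t),\tfrac{\gamma(2t)-\gamma(t)}{t}\Big\rangle\le-\frac1t\int_t^{2t}\|u(\tau)\|^2\,d\tau.
\]
As $t\to+\infty$ the right-hand side tends to $-L^2$ (since $\|u(\tau)\|\to L$ and $\tau\ge t$), while $\tfrac{\gamma(2t)-\gamma(t)}{t}=2\tfrac{\gamma(2t)}{2t}-\tfrac{\gamma(t)}{t}\to-p_f$ strongly by Definition~\ref{def:Crandall-PazyDir}. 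Choosing a sequence $t_n\to+\infty$ along which the bounded family $u(t_n)$ converges weakly to some $u_\infty$, the pairing of the weak limit with the strong limit passes to the limit and yields $\langle u_\infty,p_f\rangle\ge L^2$. Finally, weak lower semicontinuity of the norm gives $\|u_\infty\|\le L$, so Cauchy--Schwarz produces $L^2\le\langle u_\infty,p_f\rangle\le\|u_\infty\|\,\|p_f\|\le L\,\|p_f\|$; hence $L\le\|p_f\|$ (the degenerate cases $L=0$ or $p_f=0$ being immediate), which closes the argument. A minor technical point I would settle at the outset is that $\{u(t):t\ge t_0\}$ is bounded for each $t_0>0$, guaranteeing the weak cluster point $u_\infty$; this follows directly from the monotonicity already established.
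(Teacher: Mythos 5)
Your proof is correct, but it takes a genuinely different route from the paper's. The monotonicity part is the same in substance (you spell out the semigroup--contraction argument; the paper just cites \cite[Theorem 17.2.3]{ABM2014-book}). The divergence is in identifying the limit $L=\lim_t\|\partial^\circ f(\gamma(t))\|$. The paper never invokes the Crandall--Pazy convergence $S_f(t)x/t\to -p_f$: it compares \emph{two} subgradient curves $\gamma_1,\gamma_2$ issued from arbitrary points and shows by contradiction (energy identity, subgradient inequality between the two curves, nonexpansiveness) that the limiting speed is the same for every starting point; once the limit $b$ is known to be independent of the initial point, monotonicity gives $b\le\|\partial^\circ f(x)\|$ for every $x\in\dom\partial f$, hence $b\le\inf s_f$, while $b\ge\inf s_f$ is immediate. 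You instead stay on a \emph{single} curve and import Pazy's theorem (quoted in Definition~\ref{def:Crandall-PazyDir}) to get $\gamma(t)/t\to-p_f$ strongly, then combine the energy identity on $[t,2t]$, the subgradient inequality along the curve, a weak cluster point $u_\infty$ of the bounded family $u(t)$, the weak--strong pairing, and Cauchy--Schwarz. This is rigorous and not circular: the convergence in Definition~\ref{def:Crandall-PazyDir} is an external result of Pazy/Reich stated before the proposition, and in Hilbert space it is indeed strong convergence, which is exactly what your weak--strong pairing requires. The trade-off: your argument is shorter but rests on the deep Crandall--Pazy asymptotic theorem, whereas the paper's is elementary and self-contained (energy identity, convexity, nonexpansiveness only), and as a by-product shows the limiting speed is the same along \emph{every} subgradient curve, so that Propositions~\ref{prop: nonincreasing speed} and~\ref{prop: asymptotic descent} constitute an independent refinement of, rather than a consequence of, the Pazy-type asymptotics.
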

\begin{proof}
    The first assertion is well known and it can be found, for instance, in \cite[Theorem 17.2.3]{ABM2014-book}.
    We prove the second assertion. 
    Let $x_1,x_2\in \HH$ and let $\gamma_i$, with ${i=1,2}$, be the subgradient curve emanating from $x_i$ respectively. 
    Set ${b_i=\lim_{t\to+\infty}\|\partial^\circ f(\gamma_i(t))\|}$, for $i=1,2$, and assume that $b_1>b_2$.
    Without loss of generality, we can assume that $\partial f(x_2)\neq \emptyset$, otherwise we replace the initial point $x_2$ by the point~$\gamma_2(1)$.
    For any $t>0$ and $i\in\{1,2\}$, we have that 
    \[f(\gamma_i(t))=f(x_i)-\int_{0}^t \|\partial^\circ  f(\gamma_i(s)\|^2ds.\] 
    However, by convexity,
    \begin{align*}
        f(x_1)-\int_{0}^t \|\partial^\circ f(\gamma_1(s))\|^2ds& = f(\gamma_{1}(t))\\
        &\geq f(\gamma_2(t))+\langle \partial^\circ  f(\gamma_2(t)),\gamma_1(t)-\gamma_2(t)\rangle\\
        &=f(x_2)-\int_{0}^t \|\partial^\circ  f(\gamma_2(s))\|^2ds+\langle \partial^\circ  f(\gamma_2(t)),\gamma_1(t)-\gamma_2(t)\rangle.
    \end{align*}
    Thus, recalling that the subgradient flow semigroup is nonexpansive and using the first assertion, for any $t>0$ we have that
    \begin{align*}
    \int_0^t \big( \|\partial^\circ  f(\gamma_2(s))\|^2-\|\partial^\circ  f(\gamma_1(s))\|^2\big) ds &\geq f(x_2)-f(x_1)-\|\partial^\circ  f(x_2)\|\|\gamma_1(t)-\gamma_2(t)\|\\
    &\geq f(x_2)-f(x_1)-\|\partial^\circ  f(x_2)\|\|x_1-x_2\|,
    \end{align*}
    which is a contradiction because, since $b_1>b_2$, the left-hand side of the above inequality is not bounded from below. It follows that there is a unique value $b$ such that for every $x\in \overline{\dom f}$
    \[
   \|\partial^{\circ}f(\gamma_x(t))\|\to b.
    \]
    Clearly, $b\geq \dist(0,\partial f(\HH))$, since it is given as the limit of norms of elements of $\partial f(\mathcal{H})$. Using now the first assertion ande~\eqref{eq:Inf-sf=norm-pf} we deduce
    \[
    b \leq \inf_{x\in \overline{\dom f}} \|\partial^{\circ}f(x)\| = \dist(0,\partial f(\HH)).
    \]
    The conclusion follows.
\end{proof}

When $p_f = 0$ (which is the case for bounded functions), the above proposition trivially entails that $\partial^{\circ}f(\gamma(t))\to p_f = 0$ as $t\to+\infty$. We now show that this is also the case for $p_f\neq 0$.

\begin{proposition}\label{prop: asymptotic descent}
Let $f:\HH\to\R\cup\{+\infty\}$ be a proper lower semicontinuous convex function. Then, for any sequence $(x_n)_n\subset \HH$ such that $(s_f(x_n))_n$ converges to $\inf s_f$, $(\partial^\circ  f(x_n))_n$ converges to $p_f$. In particular, for any subgradient curve $\gamma$ of $f$, we have that $\lim_{t\to\infty}\partial^\circ f(\gamma(t))=p_f$.
\end{proposition}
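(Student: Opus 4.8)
The plan is to reduce everything to a single geometric fact about the closed convex set $C := \overline{\partial f(\HH)}$ in the Hilbert space $\HH$: among all sequences in $C$ whose norms decrease to the minimal norm $\|p_f\| = \dist(0,C)$, strong convergence to the minimizer $p_f$ is automatic. First I would set $y_n := \partial^\circ f(x_n)$ and observe that each $y_n$ lies in $\partial f(\HH) \subseteq C$, while by~\eqref{eq:slope-distance} we have $\|y_n\| = s_f(x_n)$; so the hypothesis reads $\|y_n\| \to \inf s_f = \|p_f\|$, using~\eqref{eq:Inf-sf=norm-pf}.

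Next I would exploit that $p_f = \proj_{C}(0)$. The variational characterization of the metric projection onto a closed convex set gives $\langle 0 - p_f, y - p_f\rangle \le 0$ for every $y \in C$, equivalently $\langle y, p_f\rangle \ge \|p_f\|^2$. Applying this with $y = y_n$ and expanding the square, I obtain
\begin{equation*}
\|y_n - p_f\|^2 = \|y_n\|^2 - 2\langle y_n, p_f\rangle + \|p_f\|^2 \le \|y_n\|^2 - \|p_f\|^2.
\end{equation*}
Since $\|y_n\| \to \|p_f\|$, the right-hand side tends to $0$, whence $y_n \to p_f$ strongly. This proves the first assertion; note that the argument uses nothing beyond the convexity and closedness of $\overline{\partial f(\HH)}$, the uniqueness of its minimal-norm element, and the Hilbert structure.

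Finally, for the ``in particular'' statement, I would invoke Proposition~\ref{prop: nonincreasing speed}, which already guarantees that $\|\partial^\circ f(\gamma(t))\| = s_f(\gamma(t)) \to \inf s_f$ along any subgradient curve $\gamma$. Thus for an arbitrary sequence $t_n \to +\infty$ the points $x_n := \gamma(t_n)$ satisfy the hypothesis of the first part, so $\partial^\circ f(\gamma(t_n)) \to p_f$; as the chosen sequence $(t_n)_n$ is arbitrary, $\lim_{t\to+\infty}\partial^\circ f(\gamma(t)) = p_f$.

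I do not anticipate a serious obstacle: the whole proof hinges on recognizing that the sought convergence is precisely the strong convergence of a norm-minimizing sequence in a Hilbert space, captured by the displayed one-line estimate. The only point requiring mild care is the bookkeeping that identifies $\|\partial^\circ f(x_n)\|$ with $s_f(x_n)$ and $\inf s_f$ with $\|p_f\|$, both of which are supplied by~\eqref{eq:slope-distance} and~\eqref{eq:Inf-sf=norm-pf}.
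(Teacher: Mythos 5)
Your proof is correct, but it takes a genuinely different route from the paper's. The paper argues through the weak topology: boundedness of $(\partial^\circ f(x_n))_n$ gives (up to the standard subsequence bookkeeping) a weak cluster point $x^*$, which lies in $\overline{\partial f(\HH)}$ because that set is convex and closed, hence weakly closed; weak lower semicontinuity of the norm forces $\|x^*\| = \inf s_f$, so $x^* = p_f$ by uniqueness of the minimal-norm element; finally, weak convergence plus convergence of norms upgrades to strong convergence via the Radon--Riesz property of Hilbert spaces. You instead bypass weak compactness entirely with the variational inequality characterizing $p_f = \proj_{\overline{\partial f(\HH)}}(0)$, namely $\langle y, p_f\rangle \geq \|p_f\|^2$ for all $y \in \overline{\partial f(\HH)}$, which yields the quantitative estimate
\begin{equation*}
\|\partial^\circ f(x_n) - p_f\|^2 \;\leq\; s_f(x_n)^2 - \|p_f\|^2,
\end{equation*}
so strong convergence is immediate from $s_f(x_n)\to\inf s_f = \|p_f\|$ (using \eqref{eq:slope-distance} and \eqref{eq:Inf-sf=norm-pf}, exactly as you note). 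Your argument is the classical ``minimizing sequences for the projection converge strongly'' fact, and it buys two things: it is more elementary (no passage to weakly convergent subsequences, no Kadec--Klee argument), and it gives an explicit rate in terms of $s_f(x_n)^2 - \|p_f\|^2$, which the paper's qualitative compactness argument does not. Both proofs handle the ``in particular'' statement identically, by invoking Proposition~\ref{prop: nonincreasing speed}.
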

\begin{proof}
    Since $(s_f(x_n))_n$ converges, the sequence $(\partial^\circ  f(x_n))_n$ is bounded. Without losing any generality, we can assume that there is $x^*\in\HH$ such that $\partial^\circ  f(x_n)\rightharpoonup x^*$ (weak convergence). Since $\overline{\partial f(\HH)}$ is closed and convex as the closure of the range of a maximal monotone operator (see \cite[Th\'{e}or\`{e}me 2.2]{Brezis1973Operateurs}), it is weakly closed. Thus, $x^*\in \overline{\partial f(\HH)}$ and so $\|x^*\|\geq \inf s_f$. Since the norm is also weakly lower semicontinuous, we get that
    \[
    \inf s_f \leq \|x^*\| \leq \liminf_{n} \| \partial^{\circ}f(x_n)\| = \lim_n s_f(x_n) = \inf s_f.
    \]
    Then, $\|x^*\| = \inf s_f$ and $x^* = \proj_{\overline{\partial f(\HH)}}(0) = p_f$. We conclude that $\partial^\circ  f(x_n)\rightharpoonup p_f$ and $\|\partial^\circ  f(x_n)\|\to \|p_f\|$, which entails that $\partial^\circ  f(x_n)\to p_f$ (see, e.g., \cite[Proposition 3.32]{Brezis2011Functional}). The proof of the first part of the proposition is then finished.
    The second assertion is now a direct consequence of Proposition~\ref{prop: nonincreasing speed}.
\end{proof}

%\begin{corollary}\label{cor: asymptotic descent}
%Let $f:\HH\to\R\cup\{+\infty\}$ be a proper lower semicontinuous convex function. 
%Then, for any $\gamma:[0,+\infty)\to \HH$ subgradient flow of $f$, then $\partial^\circ f(\gamma(t))\xrightarrow{t\to+\infty} p_f$.
%\end{corollary}

An interesting direct corollary is that the convex function $f$ becomes affine on the (possibly empty) set $\argmin s_f$, which is the set where $p_f$ is attained.

%\begin{corollary}\label{cor: affine in argmin}
%    Any lower semicontinuous convex function $f$ is affine on the set $\argmin s_f$.
%\end{corollary}

\subsection{Relation with Thom's conjecture at infinity}

Let $f:\mathcal{U}\subset \R^d\to\R$ be an analytic function defined on a nonempty open bounded set $\mathcal{U}$. Also, consider a gradient curve $\gamma:[0,+\infty)\to\mathcal{U}$ of $f$, that is,

\[\dot{\gamma}(t)=-\nabla f (\gamma(t)),~\text{for all }t>0.\]
The Thom gradient conjecture reads as follows:
\begin{center}
    {\it Suppose that $\gamma(t)\to x_0\in \mathcal{U}$. Then $\gamma$ has a tangent at $x_0$,\\ i.e. the limit of secants $\displaystyle\lim_{t\to+\infty}\dfrac{\gamma(t)-x_0}{\|\gamma(t)-x_0\| }$ exists.}
\end{center}
This conjecture was proved by Kurdyka, Mostowski and Parusi\'{n}ski in \cite{kmp2000}.
Recall that, in Euclidean spaces, a gradient curve of a convex function that attains its minimum value always converges to some minimizer \cite{Bruck1975}.
With this in mind, a similar question was considered in \cite{dhl2022,dls2010} for the gradient flow of convex functions (instead of real-analytic ones). 
However, it is shown in \cite{dls2010} that there is a smooth convex function $f:\R^2\to\R$, with $\argmin\,f=\{0\}$, such that for any $x\in \R^2\setminus \{0\}$, the secants of the gradient curve of $f$ emanating from $x$ do not converge. 
In addition, in \cite{dhl2022}, the construction was further improved: the function $f$ can be taken real-analytic on $\R^2\setminus\{0\}$ and satisfying the {\L}ojasiewicz inequality at $0$.\\

In what follows, we provide a compatible definition of secants for curves $\gamma$ that diverge to infinity, i.e. $\lim_{t\to+\infty}\|\gamma(t)\|=+\infty$. 
\begin{comment}
To do this, let us denote $\Sph$ as the unit sphere of $\R^d$, and consider a continuous curve $\gamma:[0,+\infty)\to{\R^d}$. We set 
\[\mathrm{Sec}(\gamma,s)= \left\{q\in \Sph:~ \liminf_{t\to+\infty}\left\| q-\dfrac{\gamma(s)-\gamma(t)}{\|\gamma(s)-\gamma(t)\|}\right\| =0\right\},\quad \text{for any }s>0.\]
Also, define
\[\mathrm{Sec}(\gamma,+\infty)= \left\{q\in \Sph:~\liminf_{s\to+\infty}\dist(q,\mathrm{Sec}(\gamma,s))=0 \right\}.\]
It is clear that, if $\lim_{t\to+\infty}\gamma(t)$ exists and is equal to $\gamma_\infty\in \R^d$, then
\[\mathrm{Sec}(\gamma,+\infty)= \left\{q\in \Sph:~\liminf_{t\to+\infty} \left\|q-\dfrac{\gamma(t)-\gamma_\infty}{\|\gamma(t)-\gamma_\infty\| }\right\|=0 \right\},\]
recovering the classical set of secants at $x_0$.
Let us continue with the following simple proposition. The details are left to the reader.

\end{comment}
\begin{definition}\label{def: cosmic secant}
    Let $\gamma:[0,+\infty)\to\R^d$ be a continuous curve such that $\lim_{t\to+\infty}\|\gamma(t)\|=+\infty$. Denoting by $\Sph$ the unit sphere of $\R^d$, we define
    \[
    \mathrm{Sec}(\gamma,+\infty)= \left\{q\in \Sph:~\liminf_{t\to+\infty} \left\|q-\dfrac{\gamma(0)-\gamma(t)}{\|\gamma(0)-\gamma(t)\| }\right\|=0 \right\}.
    \]
\end{definition}
\begin{remark}
    Note that in Definition~\ref{def: cosmic secant} we can replace $\gamma(0)$ by $\gamma(t_0)$ for any $t_0\geq 0$. The set $\mathrm{Sec}(\gamma,+\infty)$  relates to cosmic convergence \cite{bdm2016,r2018}. 
    Indeed, a curve $\gamma:[0,+\infty)\subset\R^d$, such that $\lim_{t\to +\infty}\|\gamma(t)\|=+\infty$, cosmically converges to $q\in \Sph$ if and only if ${\mathrm{Sec}(\gamma,+\infty)=\{q\}}$. 
\end{remark}
Recall that if $f:\R^d\to\R\cup\{+\infty\}$ is a lower semicontinuous convex function such that $p_f\neq 0$, then every subgradient curve $\gamma$ satisfies $\lim_{t\to+\infty}\|\gamma(t)\|=+\infty$. Moreover, since Proposition~\ref{prop: asymptotic descent} yields for the right-hand derivative $\dot{\gamma}^+$ that $\dot{\gamma}^+(t)=-\partial^\circ f(\gamma(t))\underset{t\to+\infty}{\longrightarrow} -p_f$, we have the following.

\begin{corollary}
    Let $f:\R^d\to\R\cup\{+\infty\}$ be a lower semicontinuous convex function. Assume that $p_f\neq 0$. Then
    \[\mathrm{Sec}(\gamma,+\infty)=\left\{-\dfrac{p_f}{\|p_f\|}\right\}.\]
\end{corollary}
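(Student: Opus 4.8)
The plan is to show that under the hypothesis $p_f \neq 0$, any subgradient curve $\gamma$ of $f$ diverges to infinity with $\mathrm{Sec}(\gamma,+\infty)$ reduced to the single direction $-p_f/\|p_f\|$. The key fact already at our disposal is Proposition~\ref{prop: asymptotic descent}, which gives $\dot{\gamma}^+(t) = -\partial^\circ f(\gamma(t)) \to -p_f$ as $t\to+\infty$, together with Definition~\ref{def:Crandall-PazyDir}, which gives the asymptotic behavior $\gamma(t)/t \to -p_f$. First I would observe that since $\gamma(t) = \gamma(0) + \int_0^t \dot{\gamma}^+(s)\,ds$ and the integrand converges to $-p_f$, a standard Cesàro-type argument (or directly the Crandall--Pazy convergence $S_f(t)x/t \to -p_f$) yields $\|\gamma(t)\| \to +\infty$ because $p_f\neq 0$. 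This justifies that $\mathrm{Sec}(\gamma,+\infty)$ is well-defined through Definition~\ref{def: cosmic secant}.

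Next I would estimate the secant vector $\dfrac{\gamma(0)-\gamma(t)}{\|\gamma(0)-\gamma(t)\|}$ directly. Writing $\gamma(0)-\gamma(t) = -\int_0^t \dot{\gamma}^+(s)\,ds$, I would divide numerator and denominator by $t$. The numerator over $t$ is $-\frac{1}{t}\int_0^t \dot{\gamma}^+(s)\,ds$, which converges to $p_f$ by the Cesàro mean of a convergent function, while the denominator over $t$ is $\|\gamma(0)-\gamma(t)\|/t = \|\gamma(t)-\gamma(0)\|/t \to \|p_f\|$ (using again that $\gamma(t)/t \to -p_f$ and $\gamma(0)/t \to 0$). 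Taking the ratio of these two limits gives
\[
\lim_{t\to+\infty}\frac{\gamma(0)-\gamma(t)}{\|\gamma(0)-\gamma(t)\|} = \frac{p_f}{\|p_f\|} = -\left(-\frac{p_f}{\|p_f\|}\right),
\]
so I must be careful with signs: the secant direction in Definition~\ref{def: cosmic secant} uses $\gamma(0)-\gamma(t)$, and since $\gamma(t)$ moves in direction $-p_f$, the vector $\gamma(0)-\gamma(t)$ points in the direction $+p_f$. Rechecking against the statement, the corollary claims the limit is $-p_f/\|p_f\|$, so I would double-check the orientation convention and adjust the computation accordingly; the essential point is that the limit of normalized secants exists and is the unit vector aligned with $\mp p_f$.

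The only genuine subtlety, which I expect to be the main obstacle, is justifying that the actual limit of the secants exists rather than merely that the $\liminf$ defining membership in $\mathrm{Sec}(\gamma,+\infty)$ is attained at a single point. Concretely, I need the strong limit $\dfrac{\gamma(0)-\gamma(t)}{\|\gamma(0)-\gamma(t)\|} \to q$ for a unique $q\in\Sph$, which then forces $\mathrm{Sec}(\gamma,+\infty)=\{q\}$ and excludes any other accumulation direction. This follows from the two Cesàro limits above being genuine limits (not subsequential), which in turn rests on $\dot{\gamma}^+(s)\to -p_f$ being a true limit as guaranteed by Proposition~\ref{prop: asymptotic descent}, so no extra monotonicity or regularity is needed. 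Once the normalized secant converges strongly to a single unit vector, the $\liminf$ in Definition~\ref{def: cosmic secant} equals zero exactly at that vector and is strictly positive elsewhere, yielding the claimed singleton and completing the proof.
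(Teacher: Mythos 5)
Your proposal is correct and follows essentially the same route as the paper: the corollary is obtained there directly from Proposition~\ref{prop: asymptotic descent} (the velocity $\dot{\gamma}^+(t)=-\partial^\circ f(\gamma(t))$ converges strongly to $-p_f$, and $\gamma(t)/t\to -p_f$ by the Crandall--Pazy convergence), so that the normalized secants have a genuine limit and $\mathrm{Sec}(\gamma,+\infty)$ is that singleton --- exactly your Ces\`{a}ro argument. Concerning the sign tension you flagged: you are right, and it traces to a typo in the paper rather than to your computation. Definition~\ref{def: cosmic secant} as literally written (with $\gamma(0)-\gamma(t)$ in the numerator) would give $\mathrm{Sec}(\gamma,+\infty)=\left\{+p_f/\|p_f\|\right\}$; however, the paper's own remark identifying $\mathrm{Sec}(\gamma,+\infty)=\{q\}$ with cosmic convergence $\gamma(t)/\|\gamma(t)\|\to q$, as well as Proposition~\ref{prop: noncosmic convergence} (where the secants of a curve moving into the positive quadrant accumulate at $(1,0)$ and $(0,1)$), are consistent only with the direction-of-travel convention $\bigl(\gamma(t)-\gamma(0)\bigr)/\|\gamma(t)-\gamma(0)\|$. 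With that intended convention your computation yields exactly $-p_f/\|p_f\|$, the claimed singleton, so your proof is complete once this convention is fixed.
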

The case $p_f=0$ can occur for both bounded or unbounded from below functions. As stated at the beginning of this subsection, the former case was already considered in \cite{dhl2022,dls2010}.
\begin{theorem}{\cite[Section 7.2]{dls2010}}
    There exists a smooth function $f:\R^2\to\R$, with $\argmin\,f = \{0\}$, such that every gradient curve $\gamma:[0,+\infty)\to\R^2$, with $\gamma(0)\neq 0$, satisfies
    \[\mathrm{Sec}(\gamma,+\infty)=\Sph.\]
\end{theorem}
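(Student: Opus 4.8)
The statement is an existence result, so my plan is to construct $f$ explicitly. First, a remark on the meaning of $\mathrm{Sec}(\gamma,+\infty)$ here: since $\argmin f=\{0\}$ and $f$ is convex, every gradient orbit converges to the minimizer $0$ (by \cite{Bruck1975}), so $\|\gamma(t)\|\to 0$ rather than $+\infty$. As explained in the discussion preceding Definition~\ref{def: cosmic secant}, in this convergent regime $\mathrm{Sec}(\gamma,+\infty)$ is read as the classical set of secant directions at the limit point, namely the set of accumulation points in $\Sph$ of the unit secants $\gamma(t)/\|\gamma(t)\|$. Proving $\mathrm{Sec}(\gamma,+\infty)=\Sph$ thus amounts to building a smooth convex function, with unique minimizer $0$, whose gradient orbits spiral infinitely around $0$ as they converge, so that these unit secants sweep out the whole circle.

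For the construction I would work in polar coordinates $(r,\theta)$ and design $f$ through the geometry of its sublevel sets together with their normal spacing. Prescribe a nested family of smooth, strictly convex bodies $(C_c)_{c>0}$ with $0\in\mathrm{int}\,C_c$ and $\bigcap_c C_c=\{0\}$, each boundary $\partial C_c$ being a fixed non-circular convex reference profile, rotated by an angle $\phi(c)$ and scaled to size comparable to $\sqrt c$, where $\phi(c)\to+\infty$ as $c\to 0^+$. Define $f$ so that $\{f\le c\}=C_c$. Crucially, I would not only prescribe the shapes $\partial C_c$ but also calibrate the values of $f$ (equivalently, the size of $\|\nabla f\|$ measured along the normals) so that the restriction of $f$ to every line is convex and its Hessian is everywhere positive semidefinite; this upgrades the mere quasiconvexity provided by convex sublevel sets to genuine convexity. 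Since the outward normal to $\partial C_c$ is the gradient direction, and this normal rotates by $\phi(c)$ as the level decreases, the gradient field itself winds around $0$ with total angle diverging as $c\to0^+$.

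The verification then splits into three parts. Smoothness is handled by choosing $C^\infty$ profile, rotation $\phi$, and amplitude functions with sufficient flattening near $0$. The spiraling is obtained by showing that the angular coordinate of $\gamma$ satisfies $|\theta(t)|\to+\infty$: as $\gamma(t)$ descends through sublevels $c\downarrow 0$, its velocity $-\nabla f(\gamma(t))$ points along a normal whose direction has turned by the diverging amount $\phi(c)$; combined with $\gamma(t)\to0$, this infinite winding makes $\gamma(t)/\|\gamma(t)\|$ enter every neighborhood on $\Sph$ infinitely often, yielding $\mathrm{Sec}(\gamma,+\infty)=\Sph$. The main obstacle is the remaining part, convexity, which constrains the rotation rate: the curvature of $\partial C_c$ and the derivative $\phi'(c)$ must satisfy a quantitative compatibility (essentially, the support functions $h_{C_c}$ must vary monotonically and with controlled second derivative), forcing the perturbation amplitude to tend to $0$ and $\phi$ to diverge only slowly, e.g. $\phi(c)\sim\log(1/c)$. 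Reconciling an unbounded total rotation of the level-set normals with convexity and $C^\infty$ smoothness of $f$ is the heart of the argument: convexity severely limits how fast normals may turn, so the per-level rotation must be small enough to preserve a positive-semidefinite Hessian yet its cumulative effect must still diverge, and getting this quantitative balance right is precisely where the delicacy lies (a careful realization of which is carried out in \cite[Section 7.2]{dls2010}).
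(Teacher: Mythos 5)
Your proposal cannot be compared against an internal argument: the paper offers no proof of this theorem, which is imported verbatim from \cite[Section 7.2]{dls2010}, where the construction is actually carried out. Measured against that cited construction, your reading and your plan are on target: since $\argmin f=\{0\}$, every gradient orbit converges to $0$ (by \cite{Bruck1975}, as the paper recalls), so $\mathrm{Sec}(\gamma,+\infty)$ must indeed be understood as the classical secant set at the limit point (cf.\ the discussion of Thom's conjecture preceding Definition~\ref{def: cosmic secant}), and the idea of prescribing shrinking, rotated copies of a non-circular convex profile as sublevel sets, with total rotation $\phi(c)\to+\infty$ as $c\downarrow 0$ and values calibrated to get genuine convexity, is exactly the strategy of the source.

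As a proof, however, the text has two genuine gaps. First, the step you yourself call the heart of the argument --- compatibility of unbounded total rotation with convexity and $\mathcal{C}^\infty$ smoothness --- is not performed but delegated to \cite[Section 7.2]{dls2010}, i.e., to the source of the very statement being proved; a blind proof cannot close itself this way, and nothing in your sketch shows that a calibration with the required properties exists (the constraint is real: any Hessian at $0$ with distinct eigenvalues would freeze the axes of the small level sets and bound the total rotation, so the construction must be isotropic or flat at the origin, forcing the eccentricity to decay --- your ``perturbation amplitude tends to $0$'' --- which simultaneously weakens the very mechanism you rely on). Second, the dynamical step is not valid as stated: from ``the level-set normals have turned by the diverging amount $\phi(c)$'' you infer that the orbit's angular coordinate diverges, but the velocity $-\nabla f(\gamma(t))$ is the inward normal \emph{at the point where $\gamma$ meets $\partial C_c$}, and that crossing point is itself an unknown of the problem; a priori the crossing points could counter-rotate in the frame of the rotating profile so that the angular increments largely cancel and $\theta(t)$ stays bounded. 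What the construction must guarantee (and what \cite{dls2010} builds in by design) is a quantitative per-annulus statement: every gradient orbit crossing the region between two consecutive levels turns by at least a fixed angle $\theta_0>0$; summing over infinitely many annuli then yields $|\theta(t)|\to+\infty$, hence $\mathrm{Sec}(\gamma,+\infty)=\Sph$, and it does so for \emph{every} initial point $\gamma(0)\neq 0$ at once. Note that the nonexpansiveness shortcut used at the end of the proof of Proposition~\ref{prop: noncosmic convergence} cannot substitute for this here: two orbits converging to the same limit point may have disjoint secant sets, so the conclusion really must come from the level-set mechanism applied to each orbit.
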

We treat below the case of an unbounded from below convex function with $p_f=0$.
The following example is inspired by the construction of Ryu in \cite{r2018} where
a nonexpansive operator $T:\R^2\to\R^2$ has been constructed, with minimal displacement vector $0$, such that the discrete dynamic $(T^n x)_n$ does not cosmically converge for any initial point $x\in \R^2$. Let us recall that this work of Ryu answered to the negative a question raised in \cite{bdm2016}.
\begin{proposition}\label{prop: noncosmic convergence}
    There exists a $\mathcal{C}^\infty$-function $f:\R^2\to\R$ such that $p_f=0$, $\inf\,f=-\infty$ and, for any subgradient curve $\gamma$ 
    \[\mathrm{Sec}(\gamma,+\infty)~\text{is not a singleton.}\]
\end{proposition}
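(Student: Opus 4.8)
The plan is to construct $f$ by designing its gradient field (equivalently, its family of sublevel sets) so that the induced gradient flow escapes to infinity while its angular coordinate never settles, and then to read off the three required properties. I begin with a reduction. Since $f$ is continuous and real-valued, it is bounded below on every bounded set, whereas along any subgradient curve $\gamma$ one has $f(\gamma(t))\downarrow\inf f=-\infty$; hence $\gamma$ leaves every bounded set and $\|\gamma(t)\|\to+\infty$. Writing $\gamma(t)=\|\gamma(t)\|\,(\cos\theta(t),\sin\theta(t))$, the chord $\gamma(0)-\gamma(t)$ is asymptotically parallel to $-\gamma(t)/\|\gamma(t)\|$, so by Definition~\ref{def: cosmic secant} the set $\mathrm{Sec}(\gamma,+\infty)$ is precisely the set of cluster points of $t\mapsto-(\cos\theta(t),\sin\theta(t))$. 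It therefore suffices to arrange that $\theta(t)$ admits at least two distinct cluster values as $t\to+\infty$; forcing $\theta(t)\to+\infty$ would even yield $\mathrm{Sec}(\gamma,+\infty)=\Sph$.

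Next I would build the potential. Adapting the spiralling gradient curves of \cite{dls2010,dhl2022} (which spiral into a minimizer) to the reciprocal situation at infinity, the idea is to make the sublevel sets $\{f\le c\}$ convex sets that march off to infinity as $c\to-\infty$ while slowly rotating, so that the steepest-descent direction twists with the position. Prescribing the flow in polar coordinates, I would choose smooth profiles producing $\dot r=\rho(r)>0$ with $\rho(r)\to0$ and $\int^{+\infty}\rho(r)^{-1}\,dr=+\infty$ (so that $r\to+\infty$, but only as $t\to+\infty$), together with a positive angular speed satisfying $r\,\dot\theta\to0$ (so that $\|\dot\gamma\|=\|\nabla f(\gamma)\|\to0$) yet $\int_0^{+\infty}\dot\theta\,dt=+\infty$ (so that $\theta(t)\to+\infty$). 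Two structural constraints must be respected while doing this: the field must be curl-free (to be a gradient) and its Jacobian positive semidefinite (for convexity); and, since a single-valued potential cannot carry a rotationally invariant nonzero tangential gradient, nor can convex sublevel sets be nested half-planes with rotating normals, the twist must be encoded through genuine angular dependence, e.g. via nested rotated epigraph-type regions whose rotation is slow relative to their translation.

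It then remains to verify the stated properties. Smoothness and convexity are built into the construction (the potential is obtained from a smooth, cyclically monotone field with positive semidefinite Hessian). The value $\inf f=-\infty$ follows because $f$ decreases without bound along the outward spiral. For the Crandall-Pazy direction, the flow exhibits points with arbitrarily small gradient, so $\inf_{x}\|\nabla f(x)\|=0$; by~\eqref{eq:Inf-sf=norm-pf} this gives $\|p_f\|=\inf s_f=0$, i.e. $p_f=\proj_{\overline{\nabla f(\R^2)}}(0)=0$. Finally, the divergence of $\int_0^{+\infty}\dot\theta\,dt$ forces $\theta(t)\to+\infty$, so $t\mapsto-(\cos\theta(t),\sin\theta(t))$ has more than one cluster point, and the reduction of the first paragraph gives that $\mathrm{Sec}(\gamma,+\infty)$ is not a singleton; since the construction makes every subgradient curve spiral, this holds for all initial points.

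The main obstacle is the tension between convexity and persistent rotation. Rotation demands a tangential gradient component, but curl-freeness couples the radial and tangential profiles, and positive semidefiniteness of the Hessian sharply limits how fast the descent direction may twist as $r$ grows; simultaneously $\|\nabla f\|$ must vanish (to give $p_f=0$) while integrating to $+\infty$ along the flow (so that $r\to+\infty$), and the angular increment $\int\dot\theta\,dt$ must still diverge. Fitting all of these competing asymptotic requirements into one smooth convex potential---so that the twist is strong enough to destroy cosmic convergence yet weak enough to keep the Hessian positive semidefinite---is the delicate core of the argument. This is exactly where Ryu's construction~\cite{r2018} cannot be used verbatim: it provides a merely monotone (nonexpansive) operator with minimal displacement $0$ whose orbits do not cosmically converge, whereas here the generator must be the subdifferential of a convex function, i.e. cyclically monotone, and realizing the same spiralling-with-vanishing-speed behaviour under this stronger rigidity is what the proof must accomplish.
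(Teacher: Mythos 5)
There is a genuine gap, and in fact two. First, your text is a blueprint rather than a proof: the actual construction of the potential is never carried out, and you yourself identify the missing step (``fitting all of these competing asymptotic requirements into one smooth convex potential'') as ``the delicate core of the argument.'' A proof that defers its core cannot be accepted. Second, and more seriously, the blueprint you propose is unrealizable. You want a gradient curve with positive angular speed and $\int_0^{+\infty}\dot\theta\,dt=+\infty$, i.e.\ $\theta(t)\to+\infty$, so that the curve winds around the origin infinitely often while $r(t)\to+\infty$ and $f(\gamma(t))\to\inf f=-\infty$. This is incompatible with convexity. Indeed, if $\gamma$ winds around the origin infinitely often, then for every $t$ there are $t_2>t_1>t$ such that the arc $\gamma([t_1,t_2])$ has nonzero winding number about $0$; by the separation theorem, $0$ then lies in the convex hull of $\gamma([t_1,t_2])$. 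Since $f$ decreases along $\gamma$, we have $f\leq f(\gamma(t_1))\leq f(\gamma(t))$ on that arc, hence on its convex hull, so $f(0)\leq f(\gamma(t))$ for all $t$. Letting $t\to+\infty$ gives $f(0)\leq\inf f=-\infty$, a contradiction. (The spiralling examples of \cite{dls2010,dhl2022} you invoke are not a counterexample to this: there the curve winds around its \emph{limit point} while staying bounded and $f$ stays bounded below; winding cannot coexist with escape to infinity and $f\to-\infty$.) So the ``tension between convexity and persistent rotation'' you flag is not merely delicate --- it is fatal, and no choice of profiles can resolve it.

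The paper's proof sidesteps rotation entirely. It takes a \emph{separable} function $f(x,y)=\Phi(x)+\Psi(y)$, where Lemma~\ref{lemma: 1d construction} produces one-dimensional smooth convex functions whose gradient curves have prescribed decreasing speeds $\phi$ and $\psi$. Choosing $\phi,\psi$ with alternating plateaus (heights $\alpha_n\downarrow 0$ with $\alpha_n/\alpha_{n+1}\to\infty$, held over long intervals), one gets $\phi,\psi\to 0$ (hence $p_f=0$ via Proposition~\ref{prop: asymptotic descent}), $\int\phi^2=\int\psi^2=+\infty$ (hence $\inf f=-\infty$), and $\limsup_t\frac{\int_0^t\phi}{\int_0^t\psi}=\limsup_t\frac{\int_0^t\psi}{\int_0^t\phi}=+\infty$. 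The curve from the origin is $\gamma(t)=\bigl(\int_0^t\phi,\int_0^t\psi\bigr)$, whose direction oscillates between nearly horizontal and nearly vertical \emph{within the first quadrant}, with no winding; thus $\mathrm{Sec}(\gamma,+\infty)$ contains two axis directions and is not a singleton, and nonexpansiveness of the flow transfers this to every subgradient curve. Your first-paragraph reduction (secants at infinity are the cluster points of $-\gamma(t)/\|\gamma(t)\|$, so two cluster directions suffice) is correct and agrees with the paper; the error is in insisting on $\theta(t)\to+\infty$ rather than mere oscillation of the direction, which is all that is needed and all that convexity permits.
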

    In order to prove the above proposition, we need the following lemma.
\begin{lemma}\label{lemma: 1d construction}
    Let $\phi:\R\to\R$ be a decreasing $\mathcal{C}^\infty$-function such that $\phi> 0$. Assume further that $\int_0^{+\infty} \phi(s)ds= +\infty$. 
    Then, there is a $\mathcal{C}^\infty$-function $\Phi:\R\to\R$ such that its gradient curve $\gamma_{x}:[0,+\infty)\to\R$, emanating from $x\in \R$, satisfies $\dot{\gamma}_x(t)=\phi(x+t)$ for all $t> 0$.
\end{lemma}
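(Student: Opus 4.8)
The plan is to reduce everything to inverting the antiderivative of $\phi$ and then reading the prescribed speed off an explicit trajectory. First I would set
\[
F(y) := \int_0^y \phi(s)\,ds,
\]
which is $\mathcal{C}^\infty$ and strictly increasing because $\phi>0$. The hypothesis $\int_0^{+\infty}\phi=+\infty$ gives $F(y)\to+\infty$ as $y\to+\infty$; moreover, since $\phi$ is decreasing and positive we have $\phi(y)\geq\phi(0)>0$ for every $y\leq 0$, so $\int_{-\infty}^{0}\phi=+\infty$ and hence $F(y)\to-\infty$ as $y\to-\infty$. Thus $F:\R\to\R$ is a $\mathcal{C}^\infty$ increasing bijection, and its inverse $F^{-1}:\R\to\R$ is again $\mathcal{C}^\infty$, with $(F^{-1})'=1/(\phi\circ F^{-1})>0$. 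This observation is the technical backbone: it guarantees that the velocity field I am about to use is defined and smooth on all of $\R$, with no equilibria.

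Next I would define the candidate by prescribing its derivative. Set
\[
\Phi(y):=-\int_0^y \phi\big(F^{-1}(r)\big)\,dr, \qquad\text{so that}\qquad \Phi'(y)=-\phi\big(F^{-1}(y)\big).
\]
Then $\Phi\in\mathcal{C}^\infty(\R)$. Since $F^{-1}$ is increasing and $\phi$ is decreasing, the map $y\mapsto -\Phi'(y)=\phi(F^{-1}(y))$ is decreasing, so $\Phi'$ is increasing and $\Phi$ is convex. This convexity is not strictly needed to state the speed identity, but it is what lets $\Phi$ sit inside the convex framework of the paper and feed into the two-dimensional construction of Proposition~\ref{prop: noncosmic convergence}.

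The third step is to solve the gradient dynamics explicitly. The gradient curve solves $\dot\gamma=-\Phi'(\gamma)=\phi\big(F^{-1}(\gamma)\big)$, and I would simply verify that the curve indexed by $x$ is $t\mapsto F(x+t)$. Indeed,
\[
\frac{d}{dt}F(x+t)=\phi(x+t) \qquad\text{and}\qquad -\Phi'\big(F(x+t)\big)=\phi\big(F^{-1}(F(x+t))\big)=\phi(x+t),
\]
so the differential equation holds and $\dot\gamma_x(t)=\phi(x+t)$, exactly as required. Because $-\Phi'=\phi\circ F^{-1}>0$ everywhere, there are no rest points and the solution extends to all of $[0,+\infty)$, so each $\gamma_x$ is well defined on the whole half-line and escapes to $+\infty$.

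The one genuine subtlety — and the point I would emphasize as the crux of the argument — is the identification of the starting point encoded in the phrase ``emanating from $x$''. The flow $\dot\gamma=\phi(F^{-1}(\gamma))$ is autonomous with velocity a strictly positive function of position alone, so all trajectories are time-translates of the single master curve $t\mapsto F(t)$, and the trajectory carrying the profile $\phi(x+t)$ is precisely $\gamma_x(t)=F(x+t)$ (hence starting at $F(x)$, or, after the change of variables $y=F^{-1}(\cdot)$, relabelled so that $x$ marks the initial point). Getting this bookkeeping right — equivalently, doing the whole construction through the conjugation by $F$ and $F^{-1}$ — is the only place that needs care; once $F:\R\to\R$ is shown to be an onto $\mathcal{C}^\infty$ bijection, the remaining verifications are the routine differentiations displayed above.
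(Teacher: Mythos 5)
Your proof is correct and takes essentially the same route as the paper: your $F$ is the paper's antiderivative $r$, your $\Phi$ (defined directly by $\Phi'=-\phi\circ F^{-1}$) is exactly the function the paper obtains from $\Phi(r(t))=-\int_0^t\phi(s)^2\,ds$ after its change of variables, and both arguments conclude by checking the explicit curve $\gamma_x(t)=F(x+t)=r(x+t)$. Your extra care in verifying that $F$ maps $\R$ onto $\R$ (using that $\phi\geq\phi(0)>0$ on $(-\infty,0]$) and in flagging that the curve indexed by $x$ actually starts at $F(x)$ addresses details the paper passes over silently, and matches how the paper itself uses the lemma (only at $x=0$, where $r(0)=0$).
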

\begin{proof}
    Consider first the function $r:\R\to\R$ defined by
    \[r(t):=\int_0^t\phi(s)ds,\quad \text{for all }t\in \R.\]
    Since $\phi>0$, it follows that $r$ is increasing and a $\mathcal{C}^{\infty}$-diffeomorphism.
    Now, define $\Phi:\R\to\R$ by
    \[\Phi\left( r(t)\right):=-\int_0^t\phi(s)^2ds,\quad \text{for all }t\in\R.\]
    Using the change of variables $s = r^{-1}(u)$ in the above integral, standard computations lead to $\Phi(t) = -\int_0^t\phi(r^{-1}(u))du$. It follows that  $\Phi'(t) = - \phi(r^{-1}(t))$, for any $t\in\R$. Therefore, $\Phi'$ is increasing, so that $\Phi$ is convex. Define $\gamma_x:\R\to\R$ as $\gamma_x(t):= r(t+x)$. Since $\Phi'(r(x+t)) = -\phi(x+t)$,  we get that for any $x\in \R$ and $t>0$
    \[
    \Phi'(\gamma_x(t)) = \Phi'(r(x+t)) = -\phi(x+t) = -\dot{\gamma}_x(t).
    \]
   % \[\dot{\gamma}_x(t)=\phi(x+t)=- \dfrac{d}{dt}\Phi(r(x+t))=- \dfrac{d}{dt}\Phi(\gamma_x(t)).\]
This proves the assertion. \end{proof}
\begin{proof}[Proof of Proposition~\ref{prop: noncosmic convergence}]
    Let $\phi,\psi:\R\to\R$ be two  $\mathcal{C}^\infty$-functions such that:
    \begin{itemize}
        \item[$(i)$] $\phi$ and $\psi$ are strictly positive and nonincreasing,
        \item[$(ii)$] $\lim_{t\to+\infty}\phi(t)=\lim_{t\to+\infty}\psi(t)=0$,
        \item[$(iii)$] $\int_0^{+\infty}\phi(s)^2ds=\int_0^{+\infty}\psi(s)^2ds=+\infty$, and
        \item[$(iv)$] $\underset{t\to+\infty}{\limsup}\,\,\dfrac{\int_0^t \phi(s)ds}{\int_0^t \psi(s)ds}\,=\, \,\underset{t\to+\infty}{\limsup}\,\,\dfrac{\int_0^t \psi(s)ds}{\int_0^t \phi(s)ds}\,=\,+\infty$.
    \end{itemize}
\noindent\rule[7pt]{\linewidth}{0.4pt}
Such functions are easy to construct. Consider any sequence $(\alpha_n)_{n\geq 0}\subset (0,+\infty)$, strictly decreasing, converging to $0$ and verifying that $\alpha_n/\alpha_{n+1}\to +\infty$. For example, the sequence can be taken as $\alpha_n = 2^{-n^2}$.

Define now $\phi,\varphi$, and a sequence $(t_n)_{n\in \N}$ as follows:
\begin{itemize}
    \item \textbf{Step 1:} Choose $t_1$ large enough such that $\alpha_1t_1 \geq 1$. For $t\in [0,t_1]$ set $\phi(t) = \alpha_0$ and $\varphi(t) = \alpha_1$.
    \item \textbf{Step 2:} Choose $t_2\geq t_1+1$ large enough such that $\alpha_2(t_2-t_1-1) \geq 1$. For $t\in [t_1+1,t_2]$, set $\phi(t) = \alpha_2$ and extend $\phi$ to $[t_1,t_1+1]$ so that it decreases smoothly from $\alpha_0$ to $\alpha_2$. For $t\in [t_1,t_2]$ set $\varphi(t) = \alpha_1$.
    \item \textbf{Step 3:} Choose $t_3\geq t_2 +1$ large enough such that $\alpha_3(t_3-t_2-1) \geq 1$. For $t\in [t_2+1,t_3]$, set $\varphi(t) = \alpha_3$, and extend $\varphi$ to $t\in [t_2,t_2+1]$ so that it decreases smoothly from $\alpha_1$ to $\alpha_3$. For $t\in [t_2,t_3]$ set $\phi(t) = \alpha_2$.
    \item \textbf{Step 4:} Go back to Step 2 and exchange the roles of $t_1$, $\alpha_1$ and $\alpha_2$ by $t_3$, $\alpha_3$ and $\alpha_4$. 
\end{itemize}
The construction is illustrated in Figure~\ref{fig:construction}, and it clearly verifies conditions $(i)$, $(ii)$ and $(iii)$. 
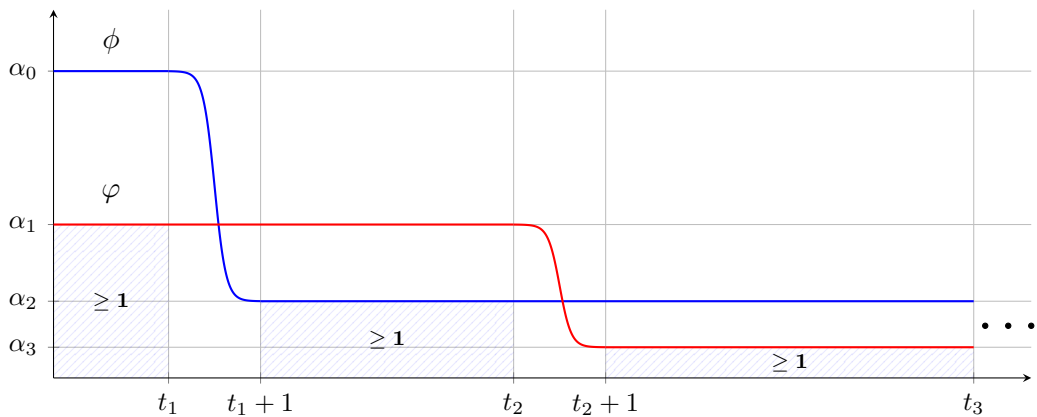
\begin{figure}[ht]
    \centering
\begin{tikzpicture}
    \begin{axis}[
    width=0.9\textwidth,
    height=0.4\textwidth, 
        axis x line=middle,
        axis y line=middle,
        ymin=0, ymax=1.2,
        xmin=0, xmax=8.5,
        samples=200,
        domain=0:8.5,
        xtick={0,1,1.8,4,4.8,8},
        xticklabels={0,\small $t_1$,\small$t_1+1$,\small$t_2$,\small$t_2+1$,\small$t_3$},
        ytick={0,0.1,0.25,0.5,1},
        yticklabels={0,\small $\alpha_3$,\small $\alpha_2$,\small $\alpha_1$, \small $\alpha_0$},
        grid=major,
    ]

         \filldraw[pattern=north east lines, pattern color=blue,draw=none,opacity = 0.3] (axis cs: {0},{0})--(axis cs: {0},{0.5}) -- (axis cs: {1},{0.5})-- (axis cs: {1},{0}) --cycle;

        \node at (axis cs:0.5,0.25) {\scriptsize $\mathbf{\geq 1}$};

        \filldraw[pattern=north east lines, pattern color=blue,draw=none,opacity = 0.3] (axis cs: {1.8},{0})--(axis cs: {1.8},{0.25}) -- (axis cs: {4},{0.25})-- (axis cs: {4},{0}) --cycle;

        \node at (axis cs:2.9,0.125) {\scriptsize $\mathbf{\geq 1}$};
        
        \filldraw[pattern=north east lines, pattern color=blue,draw=none,opacity = 0.3] (axis cs: {4.8},{0})--(axis cs: {4.8},{0.1}) -- (axis cs: {8},{0.1})-- (axis cs: {8},{0}) --cycle;

        \node at (axis cs:6.4,0.05) {\scriptsize $\mathbf{\geq 1}$};
         
        \node at (axis cs:0.5,1.1) {$\phi$};
        \node at (axis cs:0.5,0.6) {$\varphi$};
        \addplot[blue, thick, domain=0:1] {1};
        \addplot[blue, thick, domain=1:1.8] {1 - (3/4)*(1 - tanh(10*(1.4-x)))/2};
        \addplot[blue, thick, domain=1.8:8] {0.25};

        \addplot[red, thick, domain=0:4] {0.5};
        \addplot[red, thick, domain=4:4.8] {0.5 - (0.4)*(1 - tanh(10*(4.4-x)))/2};
        \addplot[red, thick, domain=4.8:8] {0.1};

        \addplot[mark size=1pt,mark=*, only marks] coordinates {(8.1,0.17) (8.3,0.17) (8.5,0.17)};
    \end{axis}
\end{tikzpicture}
    \caption{Construction of $\phi$ and $\varphi$.}
    \label{fig:construction}
\end{figure}

We further adjust the sequence $(t_n)_{n\in\N}$ to ensure that $(iv)$ holds. Indeed, taking $t_n$ sufficiently large each time, we can ensure that
\[
\begin{cases}
\displaystyle\frac{\int_0^{t_n}\phi(t)dt}{\int_0^{t_n}\varphi(t)dt} \approx \frac{\alpha_{n-1}}{\alpha_{n}}\quad\text{if }n\text{ is odd,}\\[2em]
\displaystyle\frac{\int_0^{t_n}\varphi(t)dt}{\int_0^{t_n}\phi(t)dt} \approx \frac{\alpha_{n-1}}{\alpha_{n}}\quad\text{if }n\text{ is even.}
\end{cases}
\]
Therefore, $(iv)$ also holds, completing the construction.

\noindent\rule[7pt]{\linewidth}{0.4pt}\\
Returning to the proof, we deduce from $(iii)$ that
\[ \int_0^{+\infty} \phi(s)ds =\int_0^{+\infty}\psi(s)ds =+\infty.\]
Combining with $(i)$, we apply Lemma~\ref{lemma: 1d construction} to define $\Phi$ and $\Psi$, $\mathcal{C}^\infty$-convex functions associated to $\phi$ and $\psi$ respectively.
Thanks to $(iii)$, the function $\Phi$ and $\Psi$ are not bounded from below. 
Define $f:\R^2\to\R$ by 
\[
f(x,y):=\Phi(x)+\Psi(y),\text{ for all }(x,y)\in \R^2.
\] 
By Lemma~\ref{lemma: 1d construction}, the gradient curve $\gamma=(\gamma_1,\gamma_2):\R\to\R^2$ of $f$, emanating from $0$, satisfies
\[\dot{\gamma}(t)=(\dot{\gamma}_1(t),\dot{\gamma}_2(t))=(\phi(t),\psi(t)).\]
Therefore, thanks to $(ii)$, $\lim_{t\to\infty} \dot{\gamma}(t)=0$. It follows that $p_f=0$.
Finally, thanks to $(iv)$, we readily deduce that
\[\underset{t\to+\infty}{\limsup}\,\,\dfrac{\gamma_1(t)}{\gamma_2(t)}\,=\, \underset{t\to+\infty}{\limsup}\,\,\dfrac{\gamma_2(t)}{\gamma_1(t)}\,=\,+\infty\]
Therefore, $\mathrm{Sec}(\gamma,+\infty)\supset\{(1,0),(0,1)\}.$
Finally, since the subgradient flow defines a nonexpansive semigroup, for any subgradient curve $\nu$ of $f$, we have that $\mathrm{Sec}(\nu,+\infty)=\mathrm{Sec}(\gamma,+\infty).$
\end{proof}

\begin{remark} Recall that if $f:\R^d\to\R$ is a $\mathcal{C}^1$-semi-algebraic function, then for any gradient curve $\gamma$ the secants at infinity stabilize, in the sense that $\mathrm{Sec}(\gamma,+\infty)$ is a singleton  \cite{Grandjean2007,Loi2016}. Therefore, our construction scheme cannot produce a semi-algebraic function. %not even definable in any $o$-minimal structure.
    
\end{remark}

%%%%%%%%%%%%%%%%%%%%%%%%%%%%%%%%%%%%%%%%%%%%%%%%%%%%%%%%%%%%%%%%%
%%%%%%%%%%%%%%%%%%%%%%%%%%%%%%%%%%%%%%%%%%%%%%%%%%%%%%%%%%%%%%%%%
%%%%%%%%%%%%%%%%%%%%%%%%%%%%%%%%%%%%%%%%%%%%%%%%%%%%%%%%%%%%%%%%%
%\newpage

\section{Main result: Determination of $\mathcal{C}^{1,1}_{\rm loc}$-convex functions}\label{sec:Determination}

%\subsection{The smooth case}

In this section we show that for any two convex functions $f,g:\HH\to\R$ of class $\mathcal{C}^{1,1}_{\rm loc}$, the implication~\eqref{eq:Conjecture-DeterminationNeumann} holds true. 
\begin{theorem}\label{thm:DeterminationC11-General}
Let $f,g:\mathcal{H}\to \R$ be two convex functions of class $\mathcal{C}^{1,1}_{\rm loc}$ such that
\begin{enumerate}
    \item[(i)] $\|\nabla f(x)\| = \|\nabla g(x)\|$ for all $x\in \mathcal{H}$; and
    \item[(ii)] $p_f = p_g$.
\end{enumerate}
Then, $f$ and $g$ are equal up to an additive constant.    
\end{theorem}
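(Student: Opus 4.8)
The plan is to reduce the whole statement to the pointwise identity $\nabla f=\nabla g$ on $\HH$: since $\HH$ is connected and both functions are differentiable, $\nabla f\equiv\nabla g$ immediately gives $f=g+\text{cst}$. To prove $\nabla f(x)=\nabla g(x)$ at a fixed $x$, I would run the gradient flow $\gamma:=\gamma_x$ of $f$ emanating from $x$, i.e. the solution of $\dot\gamma=-\nabla f(\gamma)$. Since $f,g$ are smooth, $s_f=\|\nabla f\|$ and $s_g=\|\nabla g\|$ by \eqref{eq:slope-distance}, so hypothesis (i) gives $s_f(\gamma(t))=s_g(\gamma(t))$ for all $t$. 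By Proposition~\ref{prop: nonincreasing speed} this common value decreases to $\inf s_f=\|p_f\|$, while \eqref{eq:Inf-sf=norm-pf} together with (ii) gives $\inf s_g=\|p_g\|=\|p_f\|$. Hence the values of both slopes along $\gamma$ converge to their respective infima, so applying the curve statement of Proposition~\ref{prop: asymptotic descent} to $f$ and its sequential statement to $g$ (along any $\gamma(t_n)$ with $t_n\to+\infty$) yields $\nabla f(\gamma(t))\to p_f$ and $\nabla g(\gamma(t))\to p_g=p_f$.

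The second step is a monotonicity identity. Differentiating along the flow and using $\|\nabla f\|=\|\nabla g\|$,
\[
\frac{d}{dt}(f-g)(\gamma(t))=\langle\nabla f(\gamma(t))-\nabla g(\gamma(t)),-\nabla f(\gamma(t))\rangle=-\tfrac12\big\|\nabla f(\gamma(t))-\nabla g(\gamma(t))\big\|^2\le 0,
\]
so $t\mapsto(f-g)(\gamma(t))$ is nonincreasing and
\[
(f-g)(x)-\lim_{t\to+\infty}(f-g)(\gamma(t))=\tfrac12\int_0^{+\infty}\big\|\nabla f(\gamma(t))-\nabla g(\gamma(t))\big\|^2\,dt .
\]
Running the flow of $g$ from $x$ gives the symmetric statement that $f-g$ is nondecreasing along it. Thus, ranging over all $x$, the theorem is equivalent to showing that the integral above vanishes, i.e. that $\nabla f\equiv\nabla g$ along $\gamma$; the value at $t=0$ is precisely the desired conclusion at $x$.

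The hard part is exactly this vanishing. When $p_f\ne0$ the functions are unbounded below along the flow (indeed $f(\gamma(t)),g(\gamma(t))\sim -\|p_f\|^2 t$), so I cannot conclude as in the bounded-from-below case \cite{PSV2021} by driving both functions to their infimum; instead I must control the \emph{sublinear} part of $(f-g)(\gamma(t))$ and show its limit equals $(f-g)(x)$. This is where the $\mathcal C^{1,1}_{\rm loc}$ regularity and Clarke's generalized Jacobian enter. Differentiating the pointwise identity $\|\nabla f\|^2=\|\nabla g\|^2$ yields, at a.e.\ point, the transport relation $\nabla^2 f\,\nabla f=\nabla^2 g\,\nabla g$, and at the (measure-zero) nondifferentiability set the Hessians must be replaced by Clarke Jacobians $H_f\in\partial(\nabla f)$, $H_g\in\partial(\nabla g)$, which are symmetric and positive semidefinite by convexity. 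The plan is to feed this relation, together with the Crandall--Pazy convergence $\dot\gamma(t)\to -p_f$, into a Gronwall-type estimate for the nonnegative energy $t\mapsto\|\nabla f(\gamma(t))-\nabla g(\gamma(t))\|^2$ and force it to vanish identically. Justifying the chain rule in the generalized sense along a flow line that crosses the nondifferentiability set, and closing the Gronwall argument, is the delicate point, and I expect it to be the crux of the proof.
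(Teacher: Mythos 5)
Your setup coincides with the paper's: run a gradient flow, combine hypothesis (i) with Propositions~\ref{prop: nonincreasing speed} and~\ref{prop: asymptotic descent} and hypothesis (ii) to get $\nabla f(\gamma(t))\to p_f$ and $\nabla g(\gamma(t))\to p_g=p_f$, differentiate $\|\nabla f\|^2=\|\nabla g\|^2$ to obtain the transport relation $H_f\nabla f=H_g\nabla g$, and pass to Clarke Jacobians in the $\mathcal{C}^{1,1}_{\rm loc}$ case. But the step you defer as ``the crux'' --- closing the argument for the energy --- \emph{is} the proof, and Gronwall is the wrong tool for it: a Gronwall inequality propagates information forward from $t=0$, whereas the only information available here sits at $t=+\infty$ (namely $\phi(t):=\tfrac12\|\nabla f(\gamma(t))-\nabla g(\gamma(t))\|^2\to\tfrac12\|p_f-p_g\|^2=0$), and the quantity to be controlled is $\phi(0)$. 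What actually closes the argument is a monotonicity observation, not a growth estimate: along your flow $\dot{\gamma}=-\nabla f(\gamma)$, omitting the argument $\gamma(t)$,
\[
\phi'=\langle\nabla f-\nabla g,\,(H_f-H_g)\dot{\gamma}\rangle
=\langle\nabla f-\nabla g,\,H_g\nabla f-H_f\nabla f\rangle
=\langle\nabla f-\nabla g,\,H_g(\nabla f-\nabla g)\rangle\ \geq 0,
\]
where the middle equality uses the transport relation and the last inequality uses positive semidefiniteness of $H_g$ (the paper runs the flow of $g$ and uses $H_f\succeq 0$; both work). Hence $\phi$ is \emph{nondecreasing} --- note this is the opposite direction from what energy-decay intuition and your computation for $(f-g)$ suggest --- so $\phi(0)\leq\lim_{t\to+\infty}\phi(t)=0$, i.e.\ $\nabla f(x)=\nabla g(x)$. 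You hold every ingredient of this three-line computation (transport relation, psd Hessians, boundary condition at infinity) but never assemble them; your integral identity for $(f-g)$ along the flow is correct but is a detour the paper does not need. In the $\mathcal{C}^{1,1}_{\rm loc}$ case the same sign argument is carried out at the level of $\partial^C\phi$, using Clarke's chain rules and the fact that one may compute limiting Jacobians only through points of a full-measure set where both $\nabla f$ and $\nabla g$ are differentiable.

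There is a second gap: your nonsmooth argument is intrinsically finite-dimensional. Rademacher's theorem and ``measure-zero nondifferentiability sets'' are meaningless in an infinite-dimensional $\mathcal{H}$, so your Clarke-Jacobian proof as written establishes the theorem only on $\R^d$, while the statement concerns an arbitrary Hilbert space. The paper bridges this in two layers: for separable $\mathcal{H}$ it replaces Clarke Jacobians by Thibault's generalized Jacobians, built from points of G\^{a}teaux differentiability (whose complement is Haar-null) and limits in the weak operator topology; for nonseparable $\mathcal{H}$ it performs a separable reduction, constructing closed separable subspaces invariant under both subgradient flows (Lemma~\ref{lemma: separable reduction}) on which the restricted slopes agree with the ambient ones (Lemma~\ref{lemma: slope reduction}), and then applies the separable case subspace by subspace. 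Neither layer appears in your proposal, and without them the proof does not reach the stated generality.
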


The key idea to prove the above theorem is to leverage on the second-order information provided by the condition $s_f = s_g$. We first present a simple proof for the case where the functions are of class $\mathcal{C}^2$. The proof is a simplification of ideas developed in \cite{BCD2018}. Then, we discuss how this proof can be extended to the $\mathcal{C}^{1,1}_{\rm loc}$ case in the finite-dimensional setting, using the theory of generalized Jacobians of Clarke \cite[Chapter 2]{Clarke1983Optimization}. 
Using a suitable extension of Clarke Jacobians due to Thibault \cite{Thibault1982Generalized}, we extend the technique to separable Hilbert spaces. The determination result in a general Hilbert spaces will follow by applying a suitable separable reduction.

\begin{remark} It is worth to mention that in the $\mathcal{C}^{1,1}_{\rm loc}$ case, the arguments of the proof in the finite-dimensional setting are essentially contained in the proof for separable spaces (and in this sense, Subsection~\ref{sec:CaseC11-finiteDim} could have been omitted). However, the proof in finite dimensions is easier to follow since it relies on standard tools of nonsmooth analysis, such as Rademacher theorem and Clarke Jacobians. For this reason, we decided to include this case in the presentation, giving the reader the opportunity to spot exactly the elements of \cite{Thibault1982Generalized} that are needed to extend the result from finite dimensions to separable Hilbert spaces.
\end{remark}	

\subsection{The case of $\mathcal{C}^2$ functions in a Hilbert space}\label{sec:CaseC2}

For a convex function $f:\HH\to\R$ of class $\mathcal{C}^2$, the slope $s_f$ coincides with the mapping ${x\mapsto \|\nabla f(x)\|}$. Moreover, the mapping $x\mapsto \frac{1}{2}\|\nabla f(x)\|^2$ is differentiable and
\[
\nabla\left(\frac{1}{2}\|\nabla f(\cdot)\|^2\right)(x) = H_f(x)\nabla f(x),
\]
where $H_f(x)$ denotes the Hessian matrix of $f$ at $x$. Note that for two convex functions $f$ and $g$, whenever $\|\nabla f\| = \|\nabla g\|$ around a point $x\in\HH$, one has that
    \begin{equation}\label{eq:Jaime-Ortega}
    H_f(x)\nabla f(x) = \nabla \left(\frac{1}{2}\|\nabla f(\cdot)\|^2\right)(x) = \nabla \left(\frac{1}{2}\|\nabla g(\cdot)\|^2\right)(x) = H_g(x)\nabla g(x).
    \end{equation}
    
Recall that the Hessian  of a convex function is always positive semidefinite. With this in mind, we announce the next proposition, establishing~\eqref{eq:Conjecture-DeterminationNeumann} for functions of class $\mathcal{C}^2$.
\begin{proposition}\label{prop:Determination-C2} Let $f,g:\mathcal{H}\to \R$ be two convex functions of class $\mathcal{C}^{2}$ such that
\begin{enumerate}
    \item[(i)] $\|\nabla f(x)\| = \|\nabla g(x)\|$ for all $x\in \mathcal{H}$; and
    \item[(ii)] $p_f = p_g$.
\end{enumerate}
Then, $f$ and $g$ are equal up to an additive constant.
\end{proposition}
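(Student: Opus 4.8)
The plan is to prove the stronger statement $\nabla f \equiv \nabla g$ on $\HH$; since $\HH$ is connected, this immediately yields $f = g + \text{cst}$. The starting point is the pointwise identity~\eqref{eq:Jaime-Ortega}, namely $H_f(x)\nabla f(x) = H_g(x)\nabla g(x)$ for every $x\in\HH$, which is available here because assumption (i) forces $\tfrac12\|\nabla f\|^2 \equiv \tfrac12\|\nabla g\|^2$ and hence these two maps share the same gradient. I will exploit this identity \emph{along the gradient flow of $f$}.

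Fix $x\in\HH$ and let $\gamma = S_f(\cdot)x$ be the (globally defined, $\mathcal{C}^1$) gradient curve of $f$ emanating from $x$, so that $\dot\gamma = -\nabla f(\gamma)$. Set $u(t):=\nabla f(\gamma(t))$ and $v(t):=\nabla g(\gamma(t))$; both are $\mathcal{C}^1$ since $f,g$ are $\mathcal{C}^2$. Differentiating and using $\dot\gamma = -u$ gives $\dot u = -H_f(\gamma)u$ and $\dot v = -H_g(\gamma)u$. Substituting the key identity $H_f(\gamma)u = H_g(\gamma)v$ into the first equation produces $\dot u = -H_g(\gamma)v$, whence $\dot u - \dot v = H_g(\gamma)(u-v)$. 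Because the Hessian of the convex function $g$ is positive semidefinite, this yields the crucial monotonicity
\[
\frac{d}{dt}\|u(t)-v(t)\|^2 = 2\big\langle u-v,\,H_g(\gamma)(u-v)\big\rangle \ge 0,
\]
so that $t\mapsto\|u(t)-v(t)\|^2$ is \emph{nondecreasing}.

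Next I would identify the limit of $u-v$ at infinity. On one hand, $u(t) = \nabla f(\gamma(t)) \to p_f$ by Proposition~\ref{prop: asymptotic descent}. On the other hand, assumption (i) gives $s_g(\gamma(t)) = \|v(t)\| = \|u(t)\| = s_f(\gamma(t))$, and by Proposition~\ref{prop: nonincreasing speed} this quantity converges to $\inf s_f = \|p_f\| = \|p_g\| = \inf s_g$, using~\eqref{eq:Inf-sf=norm-pf} and assumption (ii). Thus, for every $t_n\to+\infty$, the sequence $(\gamma(t_n))_n$ satisfies $s_g(\gamma(t_n))\to\inf s_g$, so Proposition~\ref{prop: asymptotic descent} applied to $g$ forces $v(t_n) = \nabla g(\gamma(t_n)) \to p_g$; as the limit is independent of the sequence, $v(t)\to p_g = p_f$. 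Consequently $\|u(t)-v(t)\| \to \|p_f - p_g\| = 0$.

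Finally I combine the two facts. Since $t\mapsto\|u(t)-v(t)\|^2$ is nondecreasing, we have $\|u(0)-v(0)\|^2 \le \lim_{t\to+\infty}\|u(t)-v(t)\|^2 = 0$, that is $\nabla f(x) = \nabla g(x)$; since $x\in\HH$ was arbitrary, $f$ and $g$ differ by a constant. The conceptual heart of the argument — and what makes the Crandall--Pazy (Neumann-type) condition genuinely replace boundedness from below — is the tension in these last two steps: the second-order identity forces the gap $\|u-v\|$ to be nondecreasing along the flow, while the common asymptotic behaviour of the gradients (both tending to the \emph{shared} Crandall--Pazy direction) forces that same gap to vanish at infinity, and only a gap that is already $0$ at time $0$ can reconcile both. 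I expect the only delicate points to be routine: the global existence and $\mathcal{C}^1$-regularity of the gradient flow for convex $\mathcal{C}^2$ data, and the passage from subsequential to full convergence of $v(t)$, which is legitimate precisely because every subsequential limit equals $p_g$.
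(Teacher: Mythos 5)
Your proof is correct and takes essentially the same approach as the paper: the identity~\eqref{eq:Jaime-Ortega} makes $t\mapsto\tfrac12\|\nabla f(\gamma(t))-\nabla g(\gamma(t))\|^2$ nondecreasing along a gradient flow, and Propositions~\ref{prop: nonincreasing speed} and~\ref{prop: asymptotic descent} together with $p_f=p_g$ force this quantity to vanish at infinity, hence to vanish identically. The only (immaterial) differences are that you flow along $-\nabla f$ and get the quadratic form in $H_g$ whereas the paper flows along $-\nabla g$ and gets it in $H_f$, and that you spell out the sequential argument giving $\nabla g(\gamma(t))\to p_g$ (via $s_g(\gamma(t))\to\inf s_g$ and the first part of Proposition~\ref{prop: asymptotic descent}), a step the paper leaves implicit.
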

\begin{proof}
    Choose $x\in \mathcal{H}$ and let $\gamma:[0,+\infty)\to\mathcal{H}$ be the unique solution of 
    \[
    \begin{cases}
        \dot{\gamma}(t) = -\nabla g(\gamma(t)),\quad\text{ for all }t>0,\\
        \gamma(0) = x.
    \end{cases}
    \]
    Define the function $\phi(t) = \frac{1}{2}\|\nabla f(\gamma(t)) - \nabla g(\gamma(t))\|^2$. Then, omitting the dependency on $t$ to ease the notation, we can write
    \begin{align*}
        \phi' &\,= \langle \nabla f(\gamma) - \nabla g(\gamma), (H_f(\gamma) - H_g(\gamma))\dot{\gamma}\rangle\\[0.2cm]
        &\,= \langle \nabla f(\gamma) - \nabla g(\gamma), H_g(\gamma)\nabla g(\gamma) - H_f(\gamma)\nabla g(\gamma)\rangle\\[0.2cm]
        &\stackrel{\eqref{eq:Jaime-Ortega}}{=} \langle \nabla f(\gamma) - \nabla g(\gamma), H_f(\gamma)\nabla f(\gamma) - H_f(\gamma)\nabla g(\gamma)\rangle\\[0.2cm] 
        &\,= \langle \nabla f(\gamma) - \nabla g(\gamma), H_f(\gamma)(\nabla f(\gamma) - \nabla g(\gamma))\rangle\geq 0.
    \end{align*}
    This yields that $\phi$ is nondecreasing. By Proposition~\ref{prop: asymptotic descent}, we have that $\nabla f(\gamma(t))\to p_f$ and $\nabla g(\gamma(t))\to p_g$ as $t\to+\infty$. Using hypothesis $(ii)$ we get that for all $t>0$
    \[
    0\leq \phi(t) \leq \lim_s \phi(s) = \frac{1}{2}\|p_f - p_g\|^2 = 0.
    \]
    We deduce that $\nabla f(x) = \nabla g(x)$. Since $x\in\mathcal{H}$ is arbitrary, the proof is complete. 
\end{proof}

\subsection{The case of $\mathcal{C}^{1,1}_{\rm loc}$ functions in finite dimensions}\label{sec:CaseC11-finiteDim}

If the convex functions $f$ and $g$ are not longer $\mathcal{C}^2$, we cannot replicate directly the argument of Proposition~\ref{prop:Determination-C2}, since  the functions $\nabla f$ and $\nabla g$ might not be differentiable along the curve $\gamma$. However, when $\mathcal{H} = \R^d$, thanks to the Rademacher theorem the set
\begin{equation}\label{eq:Domain-Diff}
\Omega_f = \{x\in \R^d\,:\, \nabla f\text{ is differentiable at }x\}
\end{equation}
has full Lebesgue measure, provided that $\nabla f$ is locally Lipschitz. This allows to define a generalized Hessian as the convex envelope of limits of derivatives of $\nabla f$.
Generally, for any locally Lipschitz function $F:\R^d\to\R^n$, we can define the generalized Jacobian of $F$ in the sense of Clarke (see, \cite[Section 2.6]{Clarke1983Optimization}) as
\begin{equation}\label{eq:Def-GeneralizedJacobian}
J^CF(x) = \cco\left\{ \lim_{n\to\infty} JF(x_n)\,:\, \Omega_F\ni x_n \to x \right\},
\end{equation}

where $\Omega_F$ is the set of differentiability of $F$ and $JF$ denotes the (usual) Jacobian of $F$ whenever it exists. We use the following three properties of the generalized Jacobian and the Clarke subdifferential (case $n=1$). %of a Lipschitz function $F:\R^d\to\R^n$.
\begin{itemize}
    \item \textbf{Replacement of $\Omega_{F}$} (\cite[Proposition~2.6.4]{Clarke1983Optimization}):\\ For any $\Omega\subset\Omega_{F}$ of full Lebesgue measure, and any $v\in \R^d$, one has that
    \[
        J^CF(x)v = \cco\left\{ Av\,:\, A = \lim_{n\to\infty} JF(x_n),\, \Omega\ni x_n \to x \right\}.
    \]
    \item \textbf{Chain rule for generalized Jacobians} (\cite[Proposition 2.6.6]{Clarke1983Optimization}):\\ If $h:\R^n\to \R$ is a Lipschitz function, then
    \[
    \partial^C(h\circ F)(x) \subset [J^CF(x)]^{\top}\partial^Ch(F(x)) = \{ A^{\top}v\,:\, A \in J^CF(x),\, v\in \partial^Ch(F(x)) \},
    \]
    where $A^{\top}$ is the transpose matrix of $A$.
    \item\textbf{Chain rule for Clarke subdifferential ($n=1$)} (\cite[Theorem 2.3.10]{Clarke1983Optimization}):\\ If $\psi:\R^d\to \R$ is (locally) Lipschitz and $\gamma:[0,+\infty)\to\R^d$ is  a curve of class $\mathcal{C}^1$, then
    \begin{equation}\label{eq:ChainRule-with-curves}
        \partial^C(\psi\circ \gamma)(t) \subset \langle \partial^C\psi(\gamma(t)),\dot{\gamma}(t)\rangle := \{\langle x^*,\dot{\gamma}(t)\rangle\,:\, x^*\in \partial^C\psi(\gamma(t))\}.
    \end{equation}
\end{itemize}
With these three key points in mind, we can prove the following proposition.  
\begin{proposition}\label{prop:Determination-C11-finiteDim} Let $f,g:\R^d\to \R$ be two convex functions of class $\mathcal{C}^{1,1}_{\rm loc}$ such that
\begin{enumerate}
    \item[(i)] $\|\nabla f(x)\| = \|\nabla g(x)\|$ for all $x\in \R^d$; and
    \item[(ii)] $p_f = p_g$.
\end{enumerate}
Then, $f$ and $g$ are equal up to an additive constant.
\end{proposition}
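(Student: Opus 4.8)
The plan is to imitate the proof of Proposition~\ref{prop:Determination-C2}, replacing the Hessians by generalized Jacobians of the (now merely locally Lipschitz) gradient maps $F:=\nabla f$ and $G:=\nabla g$. Fix $x\in\R^d$ and let $\gamma$ be the gradient curve of $g$ emanating from $x$; since $\nabla g$ is locally Lipschitz, $\gamma$ is of class $\mathcal{C}^1$. Set $K:=F-G$ and consider the locally Lipschitz function $\psi(y):=\tfrac12\|K(y)\|^2$, together with $\Phi:=\psi\circ\gamma$. As in the $\mathcal{C}^2$ case, the goal is to prove that $\Phi$ is nondecreasing; granting this, Proposition~\ref{prop: asymptotic descent} finishes the argument exactly as before. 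Indeed, $\nabla g(\gamma(t))\to p_g$ because $\gamma$ is the gradient curve of $g$, while hypothesis $(i)$ together with~\eqref{eq:Inf-sf=norm-pf} gives $s_f(\gamma(t))=s_g(\gamma(t))\to\|p_g\|=\|p_f\|=\inf s_f$, so the first part of Proposition~\ref{prop: asymptotic descent} yields $\nabla f(\gamma(t))\to p_f$. Hence $\lim_t\Phi(t)=\tfrac12\|p_f-p_g\|^2=0$ by $(ii)$, and since $\Phi\geq0$ is nondecreasing we conclude $\Phi\equiv0$; in particular $\nabla f(x)=\nabla g(x)$, and as $x$ is arbitrary, $f-g$ is constant.

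The heart of the matter is the almost-everywhere substitute for identity~\eqref{eq:Jaime-Ortega}. Let $\Omega:=\Omega_F\cap\Omega_G$, a set of full Lebesgue measure by Rademacher's theorem. At every $x\in\Omega$ both functions $\tfrac12\|\nabla f\|^2$ and $\tfrac12\|\nabla g\|^2$ are differentiable, and since they coincide on $\R^d$ by $(i)$, their gradients agree there:
\[
H_f(x)\nabla f(x)=H_g(x)\nabla g(x),\qquad\forall x\in\Omega,
\]
where $H_f=JF$ and $H_g=JG$ on $\Omega$. I would then estimate $\partial^C\Phi(t)$ via the two chain rules. First, \eqref{eq:ChainRule-with-curves} gives $\partial^C\Phi(t)\subseteq\langle\partial^C\psi(\gamma(t)),\dot{\gamma}(t)\rangle$. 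Second, applying the chain rule for generalized Jacobians to $\psi=h_1\circ K$ with $h_1=\tfrac12\|\cdot\|^2$ (so $\partial^C h_1(y)=\{y\}$) yields $\partial^C\psi(y)\subseteq\{M^{\top}K(y):M\in J^CK(y)\}$. Writing $v:=\dot{\gamma}(t)=-\nabla g(\gamma(t))$ and $y:=\gamma(t)$, and using $\langle M^{\top}K(y),v\rangle=\langle K(y),Mv\rangle$, we are reduced to showing that the scalar set $\langle K(y),J^CK(y)v\rangle$ is contained in $[0,+\infty)$.

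Here the replacement property enters: applied to the single Lipschitz map $K$ with the full-measure set $\Omega\subseteq\Omega_K$, it describes $J^CK(y)v$ as the convex hull of the vectors $Av$ with $A=\lim_n(H_f(x_n)-H_g(x_n))$ along $\Omega\ni x_n\to y$. Passing to a subsequence (the Jacobians are locally bounded), we may assume $H_f(x_n)\to P$ and $H_g(x_n)\to Q$, so that $A=P-Q$ and $P$ is positive semidefinite as a limit of Hessians of the convex function $f$. Using the a.e.\ identity $H_g(x_n)\nabla g(x_n)=H_f(x_n)\nabla f(x_n)$ I compute
\[
(H_f(x_n)-H_g(x_n))\nabla g(x_n)=H_f(x_n)\big(\nabla g(x_n)-\nabla f(x_n)\big)=-H_f(x_n)K(x_n),
\]
and letting $n\to\infty$ (recall $\nabla g(x_n)\to\nabla g(y)=-v$ and $K(x_n)\to K(y)$) gives $(P-Q)v=PK(y)$. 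Therefore $\langle K(y),Av\rangle=\langle K(y),PK(y)\rangle\geq0$, and since nonnegativity is preserved under convex combinations, $\langle K(y),J^CK(y)v\rangle\subseteq[0,+\infty)$. Consequently $\partial^C\Phi(t)\subseteq[0,+\infty)$ for every $t$, and Lebourg's mean value theorem forces $\Phi$ to be nondecreasing, closing the argument.

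I expect the main obstacle to be the bookkeeping in the nonsmooth chain, specifically the fact that $J^C(F-G)$ is in general only contained in $J^CF-J^CG$. I sidestep this by treating $K=F-G$ as a single locally Lipschitz map and invoking the replacement property on $\Omega\subseteq\Omega_K$ directly, which reduces everything to limits of genuine Hessians, where both the a.e.\ identity and the positive semidefiniteness of $P$ are available.
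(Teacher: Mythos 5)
Your proposal is correct and follows essentially the same route as the paper's proof: the same auxiliary function $\phi=\tfrac12\|\nabla f-\nabla g\|^2\circ\gamma$ along the gradient curve of $g$, the same two Clarke chain rules, the replacement property applied on $\Omega=\Omega_f\cap\Omega_g$ to reduce $J^C(\nabla f-\nabla g)$ to limits of genuine Jacobians, the almost-everywhere identity $H_f\nabla f=H_g\nabla g$, and positive semidefiniteness of limits of $H_f$ by convexity. The only cosmetic differences are that you conclude monotonicity via Lebourg's mean value theorem rather than via $\phi'(t)\in\partial^C\phi(t)$ a.e., and that you evaluate the quadratic form $\langle K(y),Av\rangle$ directly for each generator instead of stating, as the paper does, the set inclusion $J^C(\nabla f-\nabla g)(u)(-\nabla g(u))\subseteq J^C(\nabla f)(u)(\nabla f(u)-\nabla g(u))$.
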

\begin{proof}
    Fix $x\in \R^d$ and let $\gamma:[0,+\infty)\to\R^d$ be the unique solution of 
    \begin{equation}\label{eq:Marco-Lopez-Cerda}
    \begin{cases}
        \dot{\gamma}(t) = -\nabla g(\gamma(t)),\quad\text{ for all }t>0,\\
        \gamma(0) = x.
    \end{cases}
    \end{equation}
    Define the (locally Lipschitz) function 
    \begin{equation}\label{eq:Alejandro-Jofre}
    \phi(t) = \frac{1}{2}\|\nabla f(\gamma(t)) - \nabla g(\gamma(t))\|^2.
    \end{equation}
    We only need to show that $\phi'(t)\geq 0$, almost everywhere. Since $\phi'(t) \in \partial^{C}\phi(t)$ whenever $\phi'(t)$ exists \cite[Proposition~2.2.2]{Clarke1983Optimization}, it is enough to prove that $\partial^{C}\phi(t)\subset [0,+\infty)$. \\    
    
    Since $\gamma(\cdot)$ is a function of class $\mathcal{C}^1$,  the chain rule for the Clarke subdifferential~\eqref{eq:ChainRule-with-curves} allows to assert, for any Lipschitz function $h:\R^d\to\R$, the inclusion $\partial^C(h\circ \gamma)(t) \subset \partial^Ch(\gamma(t))\dot{\gamma}(t)$. Then, using also the generalized Jacobian chain rule \cite[Proposition 2.6.6]{Clarke1983Optimization} and setting $u=\gamma(t)$, we have
   \begin{align}
        \partial^C\phi(t) &\subseteq \left\langle \partial^C \left( \frac{1}{2}\|\nabla f-\nabla g\|^2 \right)(\gamma(t)), \dot{\gamma}(t)\right\rangle\notag\\
        &\subseteq \left\langle [J^{C}(\nabla f - \nabla g)(u)]^{\top}(\nabla f(u) - \nabla g(u)),  -\nabla g(u)\right\rangle\notag\\
        &= \Big\langle \nabla f(u) - \nabla g(u),  J^{C}(\nabla f - \nabla g)(u)(-\nabla g(u))\Big\rangle.\label{eq:Jaime-Ortega2}
    \end{align}
    Set $\Omega = \Omega_f\cap \Omega_g$. Then, by \cite[Proposition 2.6.4]{Clarke1983Optimization}, we have
    \[
    J^{C}(\nabla f - \nabla g)(u)(-\nabla g(u)) = \cco\left\{ H(-\nabla g(u))\, :\, \lim_n (H_f - H_g)(x_n),\, \Omega\ni u_n \to u \right\}.
    \]
    Choose $H = \lim_n (H_f - H_g)(u_n)$. Since $\nabla f$ and $\nabla g$ are locally Lipschitz, the sequence ${\{(H_f - H_g)(u_n)\}_n}$ is bounded and 
    \begin{align*}
        H(-\nabla g(u)) &\,= \lim_n (H_f - H_g)(u_n)(-\nabla g(u_n))\\
        & \stackrel{\eqref{eq:Jaime-Ortega}}{=} \lim_{n} H_f(u_n)(\nabla f(u_n) - \nabla g(u_n))\\
        & \,\in J^C(\nabla f)(u)(\nabla f(u)-\nabla g(u )).
    \end{align*}
   We deduce that 
    \[
    J^{C}(\nabla f - \nabla g)(u)(-\nabla g(u)) \subseteq J^C(\nabla f)(u)(\nabla f(u)-\nabla g(u))
    \]
    and so, using~\eqref{eq:Jaime-Ortega2}, we conclude
    \[
    \partial^C\phi(t) \subseteq \Big\langle \nabla f(u) - \nabla g(u),  J^C(\nabla f)(u)(\nabla f(u)-\nabla g(u))\Big\rangle.
    \]
    Since $J^C(\nabla f)(u)$ contains only positive semidefinite matrices (by convexity of $f$), the conclusion follows. 
\end{proof}

\subsection{The case of $\mathcal{C}^{1,1}_{\rm loc}$ functions in a separable Hilbert space}\label{sec:CaseC11-Separable}

The main obstruction to extend Proposition~\ref{prop:Determination-C11-finiteDim} to infinite dimensions is the fact that Rademacher theorem is no longer valid. For a locally Lipschitz function $\psi:\mathcal{H}\to\R$ it is still possible to define the Clarke subdifferential $\partial^C \psi$ using generalized directional derivatives (see \cite[Chapter 2]{Clarke1983Optimization}), but the definition of generalized Jacobians as in~\eqref{eq:Def-GeneralizedJacobian} is less standard.

In the particular case that $\mathcal{H}$ is a separable Hilbert space and $F:\mathcal{H}\to\mathcal{H}$ is a Lipschitz function, the set
\[
\Omega_{F} = \{ x\in \mathcal{H}\,:\, F\text{ is G\^{a}teaux-differentiable at }x \}
\]
is the complement of a Haar-null set  \cite{Christensen1973Measure} (see also \cite{LindenstraussPreiss2003Frechet} and the references therein for similar results). In particular, $\Omega_{F}$ is dense in $\mathcal{H}$. Then, following \cite{Thibault1982Generalized}, it is possible to extend the notion of generalized Jacobian by defining
\begin{equation}\label{eq:GeneralizedJacobian-Thibault}
\Gamma(F;x) = \cco\left\{\text{w-}\lim JF(x_n)\,:\,\Omega_{F}\ni x_n\to x \right\},
\end{equation}
where the limit $\text{w-}\lim JF(x_n)$ is the limit with respect to the weak-operator topology: that is,
\begin{equation}\label{eq:w-OperatorTopology}
\text{w-}\lim JF(x_n) = A \iff \forall y,z\in \mathcal{H}, \langle y, JF(x_n)z\rangle \to \langle y, Az\rangle.
\end{equation}

In this new setting, the aforementioned properties of the generalized Jacobian $J^CF$ have analogous counterparts for the set-valued operator $\Gamma(F;\cdot)$.

\begin{itemize}
    \item \textbf{Replacement of $\Omega_{F}$} (\cite[Proposition 2.5]{Thibault1982Generalized}): For any $\Omega\subset\Omega_{F}$ such that $\mathcal{H}\setminus\Omega$ is Haar-null, and any $v\in \mathcal{H}$, one has that 
    \[
        \Gamma(F;x)v = \Gamma_{\Omega}(F;x)v,
    \]
    where $\Gamma_{\Omega}(F;x) = \cco\{\text{w-}\lim JF(x_n)\,:\, \Omega\ni x_n\to x \}$.
    \item \textbf{Chain Rule} (\cite[Proposition 2.4]{Thibault1982Generalized}): If $h:\mathcal{H}\to \R$ is  strictly differentiable and $\Omega\subset\Omega_{F}$ such that $\mathcal{H}\setminus\Omega$ is Haar-null, then
    \[
    \partial^C(h\circ F)(x) = [\Gamma_{\Omega}(F;x)]^{\ast}\nabla h(F(x)) = \{ A^{\ast}\nabla h(F(x)) \,:\, A \in \Gamma_{\Omega}(F;x) \},
    \]
    where $A^{\ast}$ denotes the adjoint operator of $A$.
\end{itemize}

\begin{proposition}\label{prop:Determination-C11-Separable} Let $\mathcal{H}$ be a separable Hilbert space. Let $f,g:\mathcal{H}\to \R$ be two convex functions of class $\mathcal{C}^{1,1}_{\rm loc}$ such that
\begin{enumerate}
    \item[(i)] $\|\nabla f(x)\| = \|\nabla g(x)\|$ for all $x\in \mathcal{H}$; and
    \item[(ii)] $p_f = p_g$.
\end{enumerate}
Then, $f$ and $g$ are equal up to an additive constant.
\end{proposition}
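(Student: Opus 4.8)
The plan is to mirror the finite-dimensional argument of Proposition~\ref{prop:Determination-C11-finiteDim}, replacing the Clarke generalized Jacobian $J^C$ by Thibault's weak-operator generalized Jacobian $\Gamma(\,\cdot\,;\,\cdot\,)$ of~\eqref{eq:GeneralizedJacobian-Thibault}. Fix $x\in\HH$ and let $\gamma$ be the gradient curve of $g$ emanating from $x$ (the solution of $\dot\gamma=-\nabla g(\gamma)$, $\gamma(0)=x$). Writing $F=\nabla f-\nabla g:\HH\to\HH$, which is locally Lipschitz since $f,g\in\mathcal{C}^{1,1}_{\rm loc}$, and $h(w)=\tfrac12\|w\|^2$, I set $\phi(t)=h(F(\gamma(t)))=\tfrac12\|\nabla f(\gamma(t))-\nabla g(\gamma(t))\|^2$, which is locally Lipschitz. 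As in finite dimensions, it suffices to show that $\phi$ is nondecreasing: indeed, Proposition~\ref{prop: asymptotic descent} gives $\nabla f(\gamma(t))\to p_f$ and $\nabla g(\gamma(t))\to p_g$, so hypothesis (ii) forces $\phi(t)\to\tfrac12\|p_f-p_g\|^2=0$; a nonnegative nondecreasing function with limit $0$ is identically $0$, and evaluating at $t=0$ yields $\nabla f(x)=\nabla g(x)$, with $x$ arbitrary.

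To prove monotonicity I will show $\partial^C\phi(t)\subseteq[0,+\infty)$ (recall $\phi'(t)\in\partial^C\phi(t)$ whenever the derivative exists). First, Clarke's chain rule along the $\mathcal{C}^1$ curve $\gamma$ (the Hilbert-space version of~\eqref{eq:ChainRule-with-curves}, valid in Banach spaces) gives $\partial^C\phi(t)\subseteq\langle\partial^C(h\circ F)(u),\dot\gamma(t)\rangle$ with $u=\gamma(t)$. Next, taking $\Omega=\Omega_f\cap\Omega_g$, whose complement is Haar-null since Haar-null sets form a $\sigma$-ideal, and using that $h$ is strictly differentiable with $\nabla h(w)=w$, the chain rule of~\cite[Proposition~2.4]{Thibault1982Generalized} yields $\partial^C(h\circ F)(u)=[\Gamma_\Omega(F;u)]^{*}F(u)$. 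Combining these and using $\dot\gamma(t)=-\nabla g(u)$, I obtain
\[
\partial^C\phi(t)\subseteq\big\{\langle F(u),\,A(-\nabla g(u))\rangle\ :\ A\in\Gamma_\Omega(F;u)\big\}.
\]

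The heart of the argument is then to show $A(-\nabla g(u))\in\Gamma(\nabla f;u)(\nabla f(u)-\nabla g(u))$ for every $A\in\Gamma_\Omega(F;u)$, the analogue of the inclusion $J^C(\nabla f-\nabla g)(u)(-\nabla g(u))\subseteq J^C(\nabla f)(u)(\nabla f(u)-\nabla g(u))$ from the finite-dimensional proof. By the replacement property~\cite[Proposition~2.5]{Thibault1982Generalized} it is enough to treat generators $A=\text{w-}\lim JF(u_n)$ with $\Omega\ni u_n\to u$. Since $F$ is Lipschitz near $u$, the operators $JF(u_n)=H_f(u_n)-H_g(u_n)$ are uniformly bounded, so the strong convergence $\nabla g(u_n)\to\nabla g(u)$ lets me replace the fixed vector $-\nabla g(u)$ by $-\nabla g(u_n)$ up to a strongly null error, giving $JF(u_n)(-\nabla g(u_n))\rightharpoonup A(-\nabla g(u))$. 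At each $u_n\in\Omega$ both $\nabla f,\nabla g$ are Gâteaux differentiable, so the Gâteaux form of the Jaime--Ortega identity~\eqref{eq:Jaime-Ortega} (valid because $\tfrac12\|\nabla f\|^2$ and $\tfrac12\|\nabla g\|^2$ are the same function) gives $H_g(u_n)\nabla g(u_n)=H_f(u_n)\nabla f(u_n)$, whence $JF(u_n)(-\nabla g(u_n))=H_f(u_n)(\nabla f(u_n)-\nabla g(u_n))$. Extracting, via the weak-operator sequential compactness of bounded sets in the separable setting, a subsequence with $H_f(u_{n_k})\xrightarrow{\text{w-op}}B\in\Gamma(\nabla f;u)$ and using that $\nabla f(u_{n_k})-\nabla g(u_{n_k})\to\nabla f(u)-\nabla g(u)$ strongly, I conclude $A(-\nabla g(u))=B(\nabla f(u)-\nabla g(u))$. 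Since each $H_f(u_n)$ is positive semidefinite by convexity of $f$, a property stable under weak-operator limits and closed convex hulls, $B$ is positive semidefinite and $\langle F(u),A(-\nabla g(u))\rangle=\langle \nabla f(u)-\nabla g(u),\,B(\nabla f(u)-\nabla g(u))\rangle\geq0$, giving $\partial^C\phi(t)\subseteq[0,+\infty)$.

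I expect the main obstacle to be precisely this last step: the bookkeeping of weak-operator-topology limits together with the simultaneously varying base vector $-\nabla g(u_n)$. Each manipulation—passing from $-\nabla g(u)$ to $-\nabla g(u_n)$, extracting a weak-operator convergent subsequence, and matching weak limits—must be justified through the uniform boundedness provided by local Lipschitzness and the sequential weak-operator compactness afforded by separability. One must also verify that evaluating the closed convex hull $\Gamma_\Omega(F;u)$ at the fixed vector $-\nabla g(u)$ stays inside the weakly closed convex set $\Gamma(\nabla f;u)(\nabla f(u)-\nabla g(u))$, which rests on the weak-operator compactness of $\Gamma(\nabla f;u)$ and the weak-to-weak continuity of evaluation at a fixed vector.
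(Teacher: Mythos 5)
Your proposal is correct, and its skeleton is exactly the paper's: the gradient curve $\gamma$ of $g$, the function $\phi(t)=\tfrac12\|\nabla f(\gamma(t))-\nabla g(\gamma(t))\|^2$, Clarke's chain rule along $\gamma$, Thibault's chain rule and replacement property with $\Omega=\Omega_f\cap\Omega_g$, the identity~\eqref{eq:Jaime-Ortega} at G\^{a}teaux points, and positive semidefiniteness coming from convexity of $f$. The one place where you genuinely diverge is the step you yourself single out as the heart of the matter. You prove the operator-level inclusion $\Gamma_\Omega(\nabla f-\nabla g;u)(-\nabla g(u))\subseteq\Gamma(\nabla f;u)\big(\nabla f(u)-\nabla g(u)\big)$, mirroring the finite-dimensional proof of Proposition~\ref{prop:Determination-C11-finiteDim}; this forces you to invoke sequential compactness of bounded operator sets in the weak-operator topology (using separability a second time), to identify a limit $B\in\Gamma(\nabla f;u)$ of a subsequence of $H_f(u_{n_k})$, and to run an additional compactness/weak-closedness argument to pass from generators to the closed convex hull. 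The paper bypasses all of this by never leaving the scalar level: for a generator $H=\text{w-}\lim_n (H_f-H_g)(u_n)$, the uniform bound on $\|(H_f-H_g)(u_n)\|$ gives $\langle y,Hz\rangle=\lim_n\langle y_n,(H_f-H_g)(u_n)z_n\rangle$ for any strongly convergent $y_n\to y$ and $z_n\to z$; choosing $y_n=\nabla f(u_n)-\nabla g(u_n)$ and $z_n=-\nabla g(u_n)$ and applying~\eqref{eq:Jaime-Ortega} turns each term into the nonnegative quadratic form $\langle y_n,H_f(u_n)y_n\rangle$, so the inequality $\langle \nabla f(u)-\nabla g(u),H(-\nabla g(u))\rangle\geq 0$ follows with no subsequence extraction, no identification of a limit operator, and no compactness. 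What your version buys is a statement formally identical to the finite-dimensional inclusion; what the paper's version buys is economy, since it only needs the joint continuity of the pairing under strong--times--bounded-weak-operator--times--strong convergence, which is precisely the ``bookkeeping'' you worried about, resolved without compact sets. One shared subtlety (implicit in both your argument and the paper's): differentiating $\tfrac12\|\nabla f\|^2=\tfrac12\|\nabla g\|^2$ at a G\^{a}teaux point a priori yields the adjoint identity $H_f(u_n)^{\ast}\nabla f(u_n)=H_g(u_n)^{\ast}\nabla g(u_n)$, so the non-adjoint form you both use rests on self-adjointness of the G\^{a}teaux derivative of a locally Lipschitz gradient; this does hold (local Lipschitzness upgrades G\^{a}teaux to Hadamard differentiability, and symmetry then follows from the symmetric second differences of $f$), but it deserves a line in a complete write-up.
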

\begin{proof}
 Note first that the chain rule for Clarke subdifferential \cite[Theorem 2.3.10]{Clarke1983Optimization} still holds. Choose $\Omega$ as the set of common G\^{a}teaux-differentiability points of $\nabla f$ and $\nabla g$, which is the complement of a Haar-null set (see, e.g., \cite{Thibault1982Generalized} and the references therein). Choose $x\in\mathcal{H}$ and defining $\gamma$ and $\phi$ as in~\eqref{eq:Marco-Lopez-Cerda} and~\eqref{eq:Alejandro-Jofre}, respectively. Using \cite[Proposition 2.4]{Thibault1982Generalized} instead of  \cite[Proposition 2.6.6]{Clarke1983Optimization} and denoting $u=\gamma(t)$, we can replicate the proof of Proposition~\ref{prop:Determination-C11-finiteDim} to deduce
\[
\partial^C\phi(t) \subseteq \langle \nabla f(u) - \nabla g(u), \Gamma_{\Omega}(\nabla f - \nabla g;u) (-\nabla g(u))\rangle.
\]
Then, according to \cite[Proposition 2.5]{Thibault1982Generalized}, we only need to show that
    \[
    \langle \nabla f(u) - \nabla g(u), H(-\nabla g(u))\rangle \geq 0,
    \]
where $H = \text{w-}\lim_n (H_f(u_n) - H_g(u_n))$ with $\Omega\ni u_n\to u$ and $\Omega = \Omega_f\cap\Omega_g$. Since $\nabla f,\nabla g$ are locally Lipschitz, the sequence $(H_f - H_g)(x_n)$ is bounded. Thus, it is not hard to prove that for every $y_n\to y$ and $z_n\to z$ one has that
\[
\langle y,Hz\rangle = \lim_n \langle y_n,(H_f - H_g)(x_n)z_n\rangle.
\]
Then, setting $y_n = \nabla f(u_n)-\nabla g(u_n)$ and $z_n = -\nabla g(u_n)$ and using continuity of $\nabla f$ and $\nabla g$, we can write
\begin{align*}
\langle \nabla f(u) - \nabla g(u), H(-\nabla g(u))\rangle &\,= \lim_n \langle \nabla f(u_n) - \nabla g(u_n), (H_f(u_n) - H_g(u_n))(-\nabla g(u_n))\rangle\\
&\stackrel{\eqref{eq:Jaime-Ortega}}{=}\lim_n \langle \nabla f(u_n) - \nabla g(u_n), H_f(u_n)(\nabla f(u_n)-\nabla g(u_n))\rangle \geq 0.
\end{align*}
The proof is complete.
\end{proof}

\subsection{The case of $\mathcal{C}^{1,1}_{\rm loc}$ functions in a general Hilbert space}\label{sec:CaseC11-General}

In order to derive our main result to convex functions of class $\mathcal{C}^{1,1}_{\rm loc}$ defined on nonseparable Hilbert spaces, we employ a separable reduction technique which reduces the problem to separable spaces.
To do so, we need three lemmas that have independent interest.

The first lemma is well-known. We include a short proof for completeness.
\begin{lemma}\label{lemma: total flow}
Let $\mathcal{H}$ be a Hilbert space and $f:\mathcal{H}\to\R\cup\{+\infty\}$ a lower semicontinuous convex function.
Denote by $S_f:\overline{\mathrm{dom}}\,f\times[0,+\infty)\to\mathcal{H}$ the subgradient flow semigroup of $f$, that is
\[\gamma(\cdot) := S_f(x,\cdot)~\text{is the subgradient curve of $f$ emanating from }x.\]
Then, $S_f$ is continuous.
\end{lemma}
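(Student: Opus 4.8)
The plan is to deduce joint continuity of $S_f$ from the two separate one-variable continuity properties of the semigroup, combined through a triangle inequality in which the nonexpansiveness estimate in the space variable is used uniformly in the time variable.

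First I would isolate the two ingredients already available. On one hand, for each fixed $t\geq 0$ the operator $x\mapsto S_f(x,t)$ is nonexpansive on $\overline{\dom f}$ by the contraction property~\eqref{eq:Contraction}, so that $\|S_f(x,t)-S_f(y,t)\|\leq \|x-y\|$ for all $x,y\in\overline{\dom f}$, with a bound that does \emph{not} depend on $t$. On the other hand, for each fixed $x\in\overline{\dom f}$ the curve $t\mapsto S_f(x,t)=\gamma_x(t)$ is continuous (indeed absolutely continuous), by property~(3) in the definition of a nonexpansive semigroup and the construction of $S_f$ recalled in Section~\ref{sec:CP-Dir}.

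Next I would fix $(x,t)\in\overline{\dom f}\times[0,+\infty)$ and take an arbitrary sequence $(x_n,t_n)\to(x,t)$ with $x_n\in\overline{\dom f}$ and $t_n\geq 0$, and estimate
\[
\|S_f(x_n,t_n)-S_f(x,t)\| \leq \|S_f(x_n,t_n)-S_f(x,t_n)\| + \|S_f(x,t_n)-S_f(x,t)\|.
\]
The first term is bounded by $\|x_n-x\|$ thanks to nonexpansiveness of $S_f(\cdot,t_n)$, and hence tends to $0$; the second term tends to $0$ by continuity of the single curve $t\mapsto S_f(x,t)$ at the point $t$. Since the sequence was arbitrary, this gives sequential, and therefore topological, continuity of $S_f$ on $\overline{\dom f}\times[0,+\infty)$.

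The only delicate point to keep in mind — and what makes this short argument go through without any compactness or equicontinuity hypothesis — is precisely that the nonexpansiveness estimate is \emph{uniform in $t$}. This is what lets me split the increment into a purely spatial part, controlled uniformly, and a purely temporal part evaluated at the \emph{fixed} base point $x$; otherwise one would be forced to control $t\mapsto S_f(y,t)$ uniformly for $y$ near $x$, which is genuinely harder. I would also take care to invoke only properties valid for every starting point of $\overline{\dom f}$ (both ingredients above are), since $x$ may lie in $\overline{\dom f}\setminus\dom f$, where the generating inclusion~\eqref{eq:SubgradientFlow} need not hold pointwise at $t=0$.
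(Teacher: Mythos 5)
Your argument is correct and is exactly the paper's own proof: the same triangle-inequality splitting $\|S_f(x_n,t_n)-S_f(x,t)\|\leq \|S_f(x_n,t_n)-S_f(x,t_n)\|+\|S_f(x,t_n)-S_f(x,t)\|$, with the spatial term controlled by the $t$-uniform nonexpansiveness and the temporal term by continuity of the single curve $t\mapsto S_f(x,t)$. Your additional remarks on uniformity in $t$ and on starting points in $\overline{\dom}\,f\setminus\dom f$ are sound but not needed beyond what the paper states.
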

\begin{proof}
    Let $(x_n,t_n)\to (x,t)$. Using the contraction property of the subgradient flow $S_f$, we get
    \begin{align*}
    \|S_f(x_n,t_n)- S_f(x,t)\|&\leq \|S_f(x_n,t_n)- S_f(x,t_n)\| + \|S_f(x,t_n) - S_f(x,t)\|\\
    &\leq \|x-x_n\| + \|S_f(x,t_n) - S_f(x,t)\|\to 0.
    \end{align*}
   This shows the result. 
\end{proof}

The next lemma is the key element for the separable reduction argument.

\begin{lemma}[Separable reduction of gradient flows]\label{lemma: separable reduction}
Let $\mathcal{H}$ be a Hilbert space and let $f_i:\mathcal{H}\to\R$ be a convex continuous function, for every $i\in\N$. Let $A\subset \mathcal{H}$ be a  nonempty separable subset.
Then, there exists a separable subspace $\mathcal{H}_0\subset \mathcal{H}$ such that
\begin{itemize}
    \item[$(i)$] $A\subset \mathcal{H}_0$; and
    \item[$(ii)$] for every $i\in\N$, $S_{f_i}(\mathcal{H}_0,[0,+\infty))= \mathcal{H}_0$.
\end{itemize}
\end{lemma}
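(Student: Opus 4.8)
The plan is to perform a standard closing-off (Löwenheim--Skolem type) construction: starting from a countable dense subset of $A$, I iterate the two operations of taking rational linear combinations and applying each subgradient flow $S_{f_i}$ at rational times, and then pass to the closure. The delicate point is that the time parameter ranges over the uncountable set $[0,+\infty)$, so one cannot literally close off under all flow values and remain countable; instead I close off only under \emph{rational} times and recover the full orbits afterwards by continuity of $S_{f_i}$ (Lemma~\ref{lemma: total flow}).

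Concretely, fix a countable set $D_0\subset A$ that is dense in $A$ and contains $0$. For a countable set $C\subset\mathcal{H}$ define
\[
\Phi(C) = \spann_{\mathbb{Q}}(C)\ \cup\ \{\,S_{f_i}(x,t)\,:\, x\in C,\ i\in\N,\ t\in\mathbb{Q}\cap[0,+\infty)\,\},
\]
which is again countable, being a countable union of countable sets. Set $D_{n+1}=\Phi(D_n)$, let $D_\infty=\bigcup_{n}D_n$, and put $\mathcal{H}_0=\overline{D_\infty}$. Since $D_\infty$ is a $\mathbb{Q}$-linear subspace, its closure $\mathcal{H}_0$ is a closed \emph{real} subspace: closure under real scalar multiplication is recovered by approximating an arbitrary real scalar with rationals and using continuity of the vector-space operations. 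Moreover $\mathcal{H}_0$ is separable as the closure of a countable set, and $A\subset\overline{D_0}\subset\mathcal{H}_0$, which gives $(i)$.

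For $(ii)$ I would establish the invariance $S_{f_i}(\mathcal{H}_0,[0,+\infty))\subseteq\mathcal{H}_0$ in three layers. First, for $x\in D_\infty$, say $x\in D_n$, and $t\in\mathbb{Q}\cap[0,+\infty)$, the point $S_{f_i}(x,t)$ lies in $D_{n+1}\subseteq\mathcal{H}_0$ by construction. Second, for $x\in D_\infty$ and arbitrary $t\geq 0$, choose rationals $t_k\to t$; by continuity of the flow in the time variable (a consequence of Lemma~\ref{lemma: total flow}) we have $S_{f_i}(x,t_k)\to S_{f_i}(x,t)$, and since each $S_{f_i}(x,t_k)\in\mathcal{H}_0$ and $\mathcal{H}_0$ is closed, $S_{f_i}(x,t)\in\mathcal{H}_0$. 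Third, for an arbitrary $x\in\mathcal{H}_0=\overline{D_\infty}$, pick $x_k\in D_\infty$ with $x_k\to x$; the nonexpansiveness~\eqref{eq:Contraction} of $S_{f_i}(\cdot\,,t)$ gives $S_{f_i}(x_k,t)\to S_{f_i}(x,t)$, and as each $S_{f_i}(x_k,t)\in\mathcal{H}_0$ we again conclude $S_{f_i}(x,t)\in\mathcal{H}_0$. This proves invariance; the reverse inclusion $\mathcal{H}_0\subseteq S_{f_i}(\mathcal{H}_0,[0,+\infty))$ is immediate from $S_{f_i}(x,0)=x$, so that $S_{f_i}(\mathcal{H}_0,[0,+\infty))=\mathcal{H}_0$, as required.

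The only genuine obstacle is the tension between countability of the construction and the continuum of admissible times; everything else is bookkeeping. Resolving it cleanly requires exactly the two continuity properties of the semigroup, namely the joint continuity from Lemma~\ref{lemma: total flow} and the nonexpansiveness in the initial datum, which is precisely why these are invoked in the second and third layers respectively.
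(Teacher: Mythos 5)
Your proof is correct, and it follows the same overall architecture as the paper's: iterate a closing-off operation starting from $A$, take the increasing union, pass to the closure, and use nonexpansiveness of $S_{f_i}(\cdot,t)$ to transfer invariance from the dense union to its closure. The difference is purely in how smallness of the intermediate sets is maintained. You keep them \emph{countable} by closing off only under rational spans and rational times, which then forces an extra recovery layer: for $x\in D_\infty$ and arbitrary $t\geq 0$ you approximate $t$ by rationals and combine continuity of $t\mapsto S_{f_i}(x,t)$ with closedness of $\mathcal{H}_0$. The paper instead tracks \emph{separability} rather than countability: it sets $E_n=\spann(F_{n-1})$ (full real span) and $F_n=\bigcup_{i\in\N} S_{f_i}(E_n\times[0,+\infty))$ (all times at once), and observes that $F_n$ is separable because $E_n\times[0,+\infty)$ is a separable metric space and $S_{f_i}$ is jointly continuous by Lemma~\ref{lemma: total flow}, continuous images of separable metric spaces being separable. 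Consequently, the ``tension between countability of the construction and the continuum of admissible times'' that you single out as the only genuine obstacle is not an obstacle in the paper's formulation: it dissolves once one works with separable sets instead of countable ones, and your rational-time layer disappears, along with the need for the $\mathbb{Q}$-span and the argument that the closure of a $\mathbb{Q}$-subspace is a real subspace. What your version buys in exchange is that it never uses joint continuity, only continuity in $t$ for fixed initial datum (property 3 of the semigroup definition) together with nonexpansiveness in the initial datum. One small point in your favor: you explicitly verify the reverse inclusion $\mathcal{H}_0\subseteq S_{f_i}(\mathcal{H}_0,[0,+\infty))$ via $S_{f_i}(\cdot,0)=\Id$, which the paper leaves implicit.
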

\begin{proof}
    If $\mathcal{H}$ is separable there is nothing to prove.
    Without loss of generality, assume that $\mathcal{H}$ is nonseparable.
    
    In what follows, we define inductively two sequences of subsets of $\mathcal{H}$, $\{E_n\}_n$ and $\{F_n\}_n$.
    To start, set $E_1=\mathrm{span}(A)$ and 
    \[
    F_1:= \bigcup_{i\in\N} S_{f_i}(E_1\times[0,+\infty)).
    \]
    Inductively, for $n\geq 2$, we set $E_n:=\mathrm{span}(F_{n-1})$ and
    \[
    F_n:= \bigcup_{i\in\N} S_{f_i}(E_n\times[0,+\infty)).
    \] 
    
    We now prove that for any $n\in\N$, $E_n$ is a separable subspace of $\mathcal{H}$. Indeed, since the vector span of a separable set is separable, it is enough to show that $F_n$ is separable for any $n\in \N$. 
    The proof follows by induction. For $n=1$, we know that $E_1$ is separable. Fix $n\geq 1$ and assume that $E_n$ is separable. Since $S_{f_i}$ is continuous (cf. Lemma~\ref{lemma: total flow}), $F_n$ is a countable union of separable sets. Therefore, $F_n$ is separable and $E_{n+1}=\mathrm{span}(F_n)$ is separable as well.\smallskip\newline   
  Let us denote by $E_\omega$ the subspace $\bigcup_{n\in\N}E_n$. To finish the proof, we show that the subspace $\mathcal{H}_0= \overline{E_\omega}$ is separable and satisfies $(i)$ and $(ii)$.\smallskip\newline
    The separability of $\mathcal{H}_0$ comes from the separability of $E_\omega$, which is a countable union of separable subspaces.
    By construction, $A\subset E_0\subset \mathcal{H}_0$.
    So, we only need to prove $(ii)$. 
    Let $x\in \mathcal{H}_0$ and $i\in \N$. 
    If $x\in E_\omega$, then there is $n\in \N$ such that $x\in E_n$. 
    Therefore, by construction, $S_{f_i}(x,[0,+\infty))\subset E_{n+1}\subset \mathcal{H}_0$. 
    On the other hand, if $x\in \mathcal{H}_0\setminus E_\omega$, there is a sequence $\{x_n\}_n\subset E_\omega$, convergent to $x$.
    Therefore, $S_{f_i}(x_n,[0,+\infty))\subset E_\omega$ for all $n\in \N$.
    The contraction property of the subgradient flows of convex functions leads us to
    \begin{align*}
        \| S_{f_i}(x_n,t)-S_{f_i}(x,t)\|\leq \| x_n-x\| \xrightarrow{n\to\infty} 0,~\quad \text{for any }t\geq 0.
    \end{align*}
    This shows that $S_{f_i}(x,[0,+\infty))\subset \mathcal{H}_0$. The proof is now complete.
\end{proof}

\begin{lemma}\label{lemma: slope reduction}
    Let $\mathcal{H}$ be a Hilbert space and let $f:\mathcal{H}\to\R$ be a convex continuous function. 
    Let $\mathcal{H}_0$ be a closed subspace of $\mathcal{H}$ which is invariant under the subgradient flow of $f$. 
    Then
    \[ s[f](x)=s[f|_{\mathcal{H}_0}](x),\quad\text{for all }x\in \mathcal{H}_0,\]
    where $f|_{\mathcal{H}_0}$ denotes the restriction of $f$ to the space $\mathcal{H}_0$.
\end{lemma}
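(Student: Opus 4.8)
The plan is to prove the stronger statement that the minimal-norm subgradients coincide,
\[
\partial^\circ f(x) = \partial^\circ\!\big(f|_{\mathcal{H}_0}\big)(x)\qquad\text{for every }x\in\mathcal{H}_0,
\]
after which the announced equality of slopes is immediate from the characterization $s_f=\|\partial^\circ f\|$ recalled in~\eqref{eq:slope-distance}, applied both to $f$ on $\mathcal{H}$ and to $f|_{\mathcal{H}_0}$ on the Hilbert space $\mathcal{H}_0$. Throughout I would write $P=\proj_{\mathcal{H}_0}$ for the orthogonal projection onto the closed subspace $\mathcal{H}_0$, noting that $f|_{\mathcal{H}_0}$ is again convex and continuous, so that its subgradient flow on the Hilbert space $\mathcal{H}_0$ is well defined and enjoys all the properties used above.

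The first, elementary, ingredient is the inclusion $P\big(\partial f(y)\big)\subseteq \partial\big(f|_{\mathcal{H}_0}\big)(y)$ for every $y\in\mathcal{H}_0$: if $v\in\partial f(y)$ and $z\in\mathcal{H}_0$, then $z-y\in\mathcal{H}_0$, so $\langle v,z-y\rangle=\langle Pv,z-y\rangle$, and restricting the subgradient inequality for $f$ to test points of $\mathcal{H}_0$ shows that $Pv\in\partial(f|_{\mathcal{H}_0})(y)$. Since $\|Pv\|\le\|v\|$, applying this to $v=\partial^\circ f(x)$ already yields the easy inequality $s_{f|_{\mathcal{H}_0}}(x)\le s_f(x)$, which holds for \emph{any} subspace and does not use invariance.

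The reverse inequality is the heart of the argument and the only place where invariance enters. Fix $x\in\mathcal{H}_0$ and let $\gamma=S_f(x,\cdot)$ be the subgradient curve of $f$ emanating from $x$. By hypothesis $\gamma(t)\in\mathcal{H}_0$ for all $t\ge0$; since $\mathcal{H}_0$ is a closed subspace, all difference quotients of $\gamma$ lie in $\mathcal{H}_0$, hence so do the a.e.\ velocity $\dot\gamma(t)$ and the right derivative $\dot\gamma^+(t)$. Recalling that $\dot\gamma^+(t)=-\partial^\circ f(\gamma(t))$, this forces $\partial^\circ f(\gamma(t))\in\mathcal{H}_0$ for every $t$. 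Combining with the projection inclusion at $y=\gamma(t)$ (for which now $P\,\partial^\circ f(\gamma(t))=\partial^\circ f(\gamma(t))$), I obtain
\[
\dot\gamma(t)=-\partial^\circ f(\gamma(t))=-P\,\partial^\circ f(\gamma(t))\in -\partial\big(f|_{\mathcal{H}_0}\big)(\gamma(t))\qquad\text{for a.e. }t.
\]
Thus $\gamma$ is an absolutely continuous curve in $\mathcal{H}_0$, starting at $x$, that solves the subgradient differential inclusion of $f|_{\mathcal{H}_0}$. By uniqueness of subgradient curves for the convex continuous function $f|_{\mathcal{H}_0}$ on $\mathcal{H}_0$, the curve $\gamma$ coincides with the subgradient curve of $f|_{\mathcal{H}_0}$ emanating from $x$. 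Evaluating the right derivatives at $t=0$ of these two identical curves gives $-\partial^\circ f(x)=\dot\gamma^+(0)=-\partial^\circ(f|_{\mathcal{H}_0})(x)$, whence $\partial^\circ f(x)=\partial^\circ(f|_{\mathcal{H}_0})(x)$ and in particular $s_f(x)=s_{f|_{\mathcal{H}_0}}(x)$.

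The main obstacle is exactly this reverse inequality: a subgradient of the restriction $f|_{\mathcal{H}_0}$ controls $f$ only along $\mathcal{H}_0$ and need not extend to a subgradient of $f$ on all of $\mathcal{H}$, so one cannot naively compare minimal-norm elements. Indeed, for the linear map $f(x,y)=x+y$ with $\mathcal{H}_0$ the $x$-axis (which is not flow-invariant) the restricted slope is $1$ while $s_f=\sqrt2$, so equality genuinely fails without invariance. The invariance hypothesis is precisely what forces the minimal-norm subgradient $\partial^\circ f(x)$, equivalently the direction of steepest descent, to lie inside $\mathcal{H}_0$, and uniqueness of the subgradient flow then transfers this identification to the restricted function.
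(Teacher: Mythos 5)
Your proof is correct, and it shares with the paper the one crucial observation: flow-invariance of $\mathcal{H}_0$ forces the minimal-norm subgradient to lie in $\mathcal{H}_0$, because $\partial^\circ f(\gamma(t))=-\dot\gamma^+(t)$ is a limit of difference quotients of a curve living in the closed subspace $\mathcal{H}_0$. Where you diverge is in how this observation is converted into the inequality $s[f|_{\mathcal{H}_0}](x)\geq s[f](x)$. The paper stays pointwise and convex-analytic: having placed $v=-\partial^\circ f(x)/\|\partial^\circ f(x)\|$ inside $\mathcal{H}_0$, it notes that directional derivatives along directions of $\mathcal{H}_0$ are insensitive to restriction, and then uses the max formula $f'(x;v)=\max_{\xi\in\partial f(x)}\langle v,\xi\rangle$ together with the variational inequality for the projection of $0$ onto $\partial f(x)$ to get $s[f|_{\mathcal{H}_0}](x)\geq -f'(x;v)=\|\partial^\circ f(x)\|$. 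You instead argue dynamically: using your (nice) inclusion $P\big(\partial f(y)\big)\subseteq \partial\big(f|_{\mathcal{H}_0}\big)(y)$ you check that the subgradient curve of $f$ solves the subgradient inclusion of $f|_{\mathcal{H}_0}$, invoke uniqueness of solutions for maximal monotone inclusions to identify the two flows, and read off $\partial^\circ f(x)=\partial^\circ\big(f|_{\mathcal{H}_0}\big)(x)$ from the right derivatives at $t=0$. Your route invokes heavier machinery (uniqueness of the flow, and the identity $\dot\gamma^+(0)=-\partial^\circ(f|_{\mathcal{H}_0})(x)$ applied a second time, on $\mathcal{H}_0$), but it buys a strictly stronger conclusion than the lemma states: the minimal-norm subgradients coincide and $S_f(x,\cdot)=S_{f|_{\mathcal{H}_0}}(x,\cdot)$ on $\mathcal{H}_0$, not merely equality of slopes. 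The paper's argument is shorter and needs only the first-order max formula once the key membership $\partial^\circ f(x)\in\mathcal{H}_0$ is secured; note also that its ``easy'' inequality comes for free from the $\limsup$ definition of the slope, without passing through the projection inclusion. Your closing counterexample $f(x,y)=x+y$ correctly isolates why invariance is indispensable.
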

\begin{proof}
    Let $x\in  \mathcal{H}_0$. By the very definition of the slope as a limit superior, we obtain
    \[s[f](x)=\limsup_{ \mathcal{H}\ni y\to x }\frac{(f(x)-f(y))_+}{\|x-y\|}\geq \limsup_{\mathcal{H}_0\ni y\to x}\frac{(f(x)-f(y))_+}{\|x-y\|}=s[f|_{\mathcal{H}_0}](x). \]
    Let $\gamma_x$ be the subgradient curve of $f$ emanating from $x$. 
    Thanks to \cite[Theorem 17.2.2]{ABM2014-book}, we have that 
    \[
    \partial^\circ f(x)= -\gamma_x^+(0)\in \mathcal{H}_0.
    \]
    
     If $s[f](x)=0$, there is nothing to do. 
     So, assume that $s[f](x)>0$ and set $v=-\partial^{\circ}f(x)/\|\partial^{\circ}f(x)\|$. 
     Since $x,v\in \mathcal{H}_0$, we have that
    \begin{align*}
    s[f|_{\mathcal{H}_0}](x) \geq -(f|_{\mathcal{H}_0})'(x;v) &= -f'(x;v)=-\max_{\xi\in\partial f(x)}\langle v,\xi\rangle = \|\partial^{\circ} f(x)\| = s[f](x),
    \end{align*}
    where $(f|_{\mathcal{H}_0})'(x;\cdot)$ and $f'(x;\cdot)$ are the directional derivatives at $x$ of $f|_{\mathcal{H}_0}$ and $f$ respectively. 
    %
%    Note that the maximum of $\langle v,\xi\rangle$ is attained at $\xi = -\partial^{\circ}f(x)$ as a consequence of the variational characterization of the metric projection onto convex sets, namely
%    \[z = \mathrm{proj}(0,\partial f(x))(=\partial^{\circ}f(x) )\quad \text{if and only if}\quad
%    \langle z, \xi - z\rangle \geq 0,\quad\forall \xi\in \partial f(x).
%    \]
%
%    The proof is complete.
\end{proof}
Finally, we can prove the main result of this section, Theorem~\ref{thm:DeterminationC11-General}.
\begin{proof}[Proof of Theorem~\ref{thm:DeterminationC11-General}]
    If $\mathcal{H}$ is a separable Hilbert space, the result follows from Proposition~\ref{prop:Determination-C11-Separable}.
    Let us assume that $\mathcal{H}$ is nonseparable.
    Let $S_f$ and $S_g$ be the subgradient flow semigroup of $f$ and $g$ respectively.
    Consider the following family of subspaces of $\mathcal{H}$:
    \begin{align*}
      \mathscr{H}:=\left\{\begin{array}{c}X~\textit{is a closed and separable subspace}~\text{such that } \\[0.1cm]
      ~S_f(X\times[0,+\infty))\cup S_g(X\times[0,+\infty))\subset X\end{array}\right\}.
    \end{align*}
    Thanks to Lemma~\ref{lemma: separable reduction}, $\mathscr{H}$ is nonempty. In addition
    \begin{align}\label{eq: separable cover}
      \mathcal{H}=\bigcup\{X:~X\in \mathscr{H}\}.  
    \end{align}
    Further, thanks to Lemma~\ref{lemma: slope reduction}, for any $X\in \mathscr{H}$ we have that $p_f\in X$ and $\| \nabla f|_X\| =\|\nabla g|_X\|$.\\
    
    Fix $X\in \mathscr{H}$. 
    By Proposition~\ref{prop:Determination-C11-Separable}, there exists $c_X\in \R$ such that $f=g+c_X$ on $X$.
    Since $0\in X$, we get that $c_X= f(0)-g(0)$.
    Since $X\in \mathscr{H}$ is arbitrary and~\eqref{eq: separable cover} holds, we deduce that $f=g + (f(0)-g(0))$ on $\mathcal{H}$. 
    \end{proof}

\section{The case where Crandall-Pazy direction is attained}

In the previous section, we leverage on the $\mathcal{C}^{1,1}_{\rm loc}$ structure to control the function $\|\nabla f- \nabla g\|$ along the gradient curves using second-order information. It is not clear if this approach can be extended for nonsmooth convex functions. This being said, we show that whenever the Crandall-Pazy direction is attained, the asymptotic behavior can be controlled via a common gradient curve of $f$ and $g$. 

\begin{proposition}\label{prop:Determination-nonsmooth} Let $f,g:\mathcal{H}\to \R\cup\{+\infty\}$ be two lower semicontinuous convex functions such that
\begin{enumerate}
    \item[$(i)$] $s_f(x) = s_g(x)$ for all $x\in \mathcal{H}$;
    \item[$(ii)$] $p_f=p_g$; and
    \item[$(iii)$] $p_f\in\partial f(\HH)$. %there is $x\in \mathcal{H}$ such that $\partial^\circ f(x)=p_f$.
\end{enumerate}
Then, $f$ and $g$ are equal up to an additive constant.
\end{proposition}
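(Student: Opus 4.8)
The plan is to reduce everything to a single, explicitly computable subgradient curve shared by $f$ and $g$, and then to transport the constant it carries to every other point via monotonicity of $f-g$ along subgradient flows. First I would use the hypotheses to produce a good base point. Since $s_f(x)=\dist(0,\partial f(x))$ with the convention $\dist(0,\emptyset)=+\infty$, condition $(i)$ forces $\dom\partial f=\dom\partial g$ and $\inf s_f=\inf s_g$, consistent via~\eqref{eq:Inf-sf=norm-pf} with $(ii)$. Condition $(iii)$ gives $x_0$ with $p_f\in\partial f(x_0)$; since $\|\partial^\circ f(x_0)\|=s_f(x_0)\geq\inf s_f=\|p_f\|$ while $p_f\in\partial f(x_0)$, minimality of $\partial^\circ f(x_0)$ yields $\partial^\circ f(x_0)=p_f$, i.e. $x_0\in\argmin s_f$. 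As $s_f=s_g$, also $x_0\in\argmin s_g$, so Proposition~\ref{prop: asymptotic descent} applied to the constant sequence gives $\partial^\circ g(x_0)=p_g=p_f$.

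Next I would build the common ray. Let $R$ be the subgradient curve of $f$ from $x_0$. By Proposition~\ref{prop: nonincreasing speed}, $t\mapsto\|\partial^\circ f(R(t))\|$ is nonincreasing and already equals $\inf s_f$ at $t=0$, hence stays there; thus $R(t)\in\argmin s_f$ and $\partial^\circ f(R(t))=p_f$, so $\dot R(t)=-p_f$ and $R(t)=x_0-tp_f$. The identical computation for $g$ shows $R$ is also the subgradient curve of $g$ from $x_0$. The energy identity $h(R(t))=h(x_0)-\int_0^t\|\partial^\circ h(R(s))\|^2\,ds$ for $h\in\{f,g\}$ then gives $f(R(t))=f(x_0)-t\|p_f\|^2$ and $g(R(t))=g(x_0)-t\|p_f\|^2$, so $(f-g)(R(t))\equiv c:=f(x_0)-g(x_0)$.

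The heart of the argument is the comparison with $R$. Let $\gamma$ be any subgradient curve of $f$ (resp. of $g$) from an arbitrary $x$. Nonexpansiveness of the corresponding semigroup gives $\|\gamma(t)-R(t)\|\leq\|x-x_0\|=:D$, while Proposition~\ref{prop: asymptotic descent} together with $s_f=s_g$ and $\inf s_f=\inf s_g$ gives both $\partial^\circ f(\gamma(t))\to p_f$ and $\partial^\circ g(\gamma(t))\to p_f$. For $h\in\{f,g\}$, the two convexity inequalities between $\gamma(t)$ and $R(t)$ (using $p_f\in\partial h(R(t))$ and $\partial^\circ h(\gamma(t))\in\partial h(\gamma(t))$) sandwich $h(\gamma(t))-h(R(t))$ between $\langle p_f,\gamma(t)-R(t)\rangle$ and $\langle\partial^\circ h(\gamma(t)),\gamma(t)-R(t)\rangle$, whence
\[
\bigl| h(\gamma(t))-h(R(t))-\langle p_f,\gamma(t)-R(t)\rangle\bigr|\leq \|\partial^\circ h(\gamma(t))-p_f\|\,D\xrightarrow[t\to\infty]{}0.
\]
Subtracting the $h=f$ and $h=g$ estimates, the common term $\langle p_f,\gamma(t)-R(t)\rangle$ cancels and $(f-g)(\gamma(t))-(f-g)(R(t))\to 0$, so $(f-g)(\gamma(t))\to c$ along \emph{every} subgradient curve of $f$ or of $g$. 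To close, I would invoke the monotonicity computation already carried out in the introduction: along a subgradient curve of $f$ one has $\tfrac{d}{dt}f(\gamma(t))=-s_f(\gamma(t))^2$ and, by the one-sided convexity estimate and Cauchy--Schwarz with $(i)$, $\tfrac{d^+}{dt}g(\gamma(t))\geq-\langle\partial^\circ g(\gamma(t)),\partial^\circ f(\gamma(t))\rangle\geq-s_f(\gamma(t))^2$, so $f-g$ is nonincreasing and $(f-g)(x)\geq c$; the symmetric computation along a subgradient curve of $g$ gives $(f-g)(x)\leq c$. Hence $f-g\equiv c$.

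I expect the genuine difficulty to lie in the identification of the limiting constant $c$, not in the monotonicity. Indeed, when $p_f\neq 0$ the flows diverge to infinity and $f(\gamma(t)),g(\gamma(t))\to-\infty$, so the classical ``$\inf f-\inf g$'' reasoning of~\eqref{eq:DeterminationAsUniqueness} is unavailable; the new idea is to subtract off the explicitly known ray $R$ and control $h(\gamma(t))-h(R(t))$ uniformly, for which boundedness of $\gamma-R$ (nonexpansiveness) together with $\partial^\circ h(\gamma(t))\to p_f$ is exactly what is needed, and this is precisely where attainment $(iii)$ is used. The remaining technical care is the standard one: ensuring $t\mapsto f(\gamma(t))$ and $t\mapsto g(\gamma(t))$ are absolutely continuous along the foreign flow so that the chain rule of~\cite[Proposition 17.2.5]{ABM2014-book} applies; here the structural identity $\dom\partial f=\dom\partial g$ forced by $(i)$ guarantees that the foreign flow remains in the domain of subdifferentiability for $t>0$, making $\partial^\circ f(\gamma(t))$ and $\partial^\circ g(\gamma(t))$ well defined and legitimizing the estimates above.
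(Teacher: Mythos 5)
Your proof is correct, and although its skeleton matches the paper's (base point $\hat x$ with $\partial^\circ f(\hat x)=\partial^\circ g(\hat x)=p_f$ obtained from $(iii)$ and Proposition~\ref{prop: asymptotic descent}; the common ray $R(t)=\hat x-tp_f$ of Lemma~\ref{lemma:1}, along which $f-g$ is constant; monotonicity of $f-g$ along the flows of $f$ and of $g$ to pull the limiting constant back to an arbitrary point), the crucial limiting step is carried out by a genuinely different and leaner mechanism. The paper identifies $\lim_{t\to\infty}(f-g)(\gamma(t))$ by integrating the convex chain rule over the homotopy $w_t(\lambda)=S_f(\lambda\hat x+(1-\lambda)y,t)$, and must then prove two auxiliary lemmas solely to legitimize dominated convergence: measurability of $\lambda\mapsto\partial^\circ f(w_t(\lambda))$ (Lemma~\ref{lemma: 2}) and the uniform bound of Lemma~\ref{lemma: 3}. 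Your two-sided subgradient sandwich
\[
\langle p_f,\gamma(t)-R(t)\rangle\;\leq\; h(\gamma(t))-h(R(t))\;\leq\;\langle\partial^\circ h(\gamma(t)),\gamma(t)-R(t)\rangle,\qquad h\in\{f,g\},
\]
bypasses all of this: it uses only nonexpansiveness ($\|\gamma(t)-R(t)\|\leq D$) and the pointwise convergence $\partial^\circ h(\gamma(t))\to p_f$ from Proposition~\ref{prop: asymptotic descent}, where the transfer to the ``foreign'' function along the flow of the other one is exactly your observation that $s_g(\gamma(t))=s_f(\gamma(t))\to\inf s_f=\inf s_g$, so that Proposition~\ref{prop: asymptotic descent} applies to $g$ along the $f$-flow. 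In particular, no integration in $\lambda$, no measurability lemma and no domination argument are needed, and the nonsmooth case costs nothing beyond the smooth one, whereas in the paper the passage from Proposition~\ref{prop: case C1 p} to the general statement is precisely where the technical work sits. Two small touch-ups: in your monotonicity computation the inequality $\tfrac{d}{dt}g(\gamma(t))\geq-\langle\partial^\circ g(\gamma(t)),\partial^\circ f(\gamma(t))\rangle$ is in fact an equality (the chain rule of \cite[Proposition 17.2.5]{ABM2014-book} gives $\langle\xi,\dot\gamma(t)\rangle$ for every $\xi\in\partial g(\gamma(t))$), and your argument as written establishes $f-g\equiv c$ on $\Omega=\dom\partial f=\dom\partial g$, so you should append the paper's one-sentence observation that convexity and lower semicontinuity propagate the identity from $\Omega$ to all of $\mathcal{H}$.
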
 
For the sake of clarity, we first prove the case when $f$ and $g$ are of class $\mathcal{C}^1$. Then, by means of some technical lemmas we extend the result to the nonsmooth setting.
Let us start with the following lemma.
\begin{lemma}\label{lemma:1}
    Let $f:\mathcal{H}\to\R\cup\{+\infty\}$ be a lower semicontinuous convex function. 
    Assume that there is $\hat{x}\in \HH$ such that $\partial^\circ f(\hat{x})=p_f$. 
    Then the curve $\eta(t) = \hat{x}-tp_f$, $t\in[0,+\infty)$, is the subgradient curve of $f$ emanating from $\hat{x}$.
\end{lemma}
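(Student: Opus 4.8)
The plan is to identify the affine ray $\eta$ with the unique subgradient curve of $f$ emanating from $\hat{x}$. Let $\gamma\colon[0,+\infty)\to\HH$ denote that curve, i.e.\ the unique absolutely continuous solution of~\eqref{eq:SubgradientFlow} with $\gamma(0)=\hat{x}$; note that $\hat{x}\in\dom(\partial f)$ because the hypothesis $\partial^\circ f(\hat{x})=p_f$ forces $\partial f(\hat{x})\neq\emptyset$, so the right derivative satisfies $\dot{\gamma}^+(t)=-\partial^\circ f(\gamma(t))$ for every $t\geq 0$. Since the subgradient curve is characterized as the unique solution of~\eqref{eq:SubgradientFlow}, it suffices to compute $\gamma$ explicitly, and the whole task reduces to proving that $\partial^\circ f(\gamma(t))=p_f$ for all $t\geq 0$.

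First I would control the speed along $\gamma$. By Proposition~\ref{prop: nonincreasing speed}, the map $t\mapsto\|\partial^\circ f(\gamma(t))\|$ is nonincreasing and converges, as $t\to+\infty$, to $\inf s_f$, which by~\eqref{eq:Inf-sf=norm-pf} equals $\|p_f\|$. At $t=0$ the hypothesis gives $\|\partial^\circ f(\gamma(0))\|=\|\partial^\circ f(\hat{x})\|=\|p_f\|$. A nonincreasing function that starts at $\|p_f\|$ and decreases down to $\|p_f\|$ is constant; hence $\|\partial^\circ f(\gamma(t))\|=\|p_f\|$ for every $t\geq 0$.

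The crucial step, which I expect to be the main (if modest) obstacle, is to upgrade this equality of \emph{norms} into an equality of \emph{vectors}. For each fixed $t$, the element $\partial^\circ f(\gamma(t))$ lies in $\partial f(\HH)\subseteq\overline{\partial f(\HH)}$, a closed convex set (as the closure of the range of a maximally monotone operator), and it has norm exactly $\|p_f\|=\dist\bigl(0,\overline{\partial f(\HH)}\bigr)$. Thus $\partial^\circ f(\gamma(t))$ realizes the distance from $0$ to $\overline{\partial f(\HH)}$; since the metric projection onto a nonempty closed convex subset of a Hilbert space is single-valued, the unique minimizer is $p_f=\proj_{\overline{\partial f(\HH)}}(0)$ by~\eqref{eq:CP-direction}, whence $\partial^\circ f(\gamma(t))=p_f$ for all $t\geq 0$. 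Having $\dot{\gamma}^+(t)=-\partial^\circ f(\gamma(t))=-p_f$ for every $t\geq 0$ (equivalently $\dot{\gamma}(t)=-p_f$ almost everywhere), absolute continuity yields $\gamma(t)=\gamma(0)-t p_f=\hat{x}-t p_f=\eta(t)$. Therefore $\eta$ is exactly the subgradient curve of $f$ emanating from $\hat{x}$, which is the desired conclusion.
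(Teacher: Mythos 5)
Your proof is correct and follows essentially the same route as the paper: identify the right derivative of the subgradient curve with $-\partial^{\circ}f(\gamma(t))$, show its norm is constantly $\|p_f\|$ via the nonincreasing-speed property, and then use uniqueness of the minimal-norm element of the closed convex set $\overline{\partial f(\HH)}$ to upgrade the norm equality to the vector equality $\partial^{\circ}f(\gamma(t))=p_f$ before integrating. The only cosmetic difference is that you invoke the convergence statement of Proposition~\ref{prop: nonincreasing speed} to pin the norm down, whereas the paper gets the lower bound $\|\partial^{\circ}f(\gamma(t))\|\geq\|p_f\|$ directly from the fact that these vectors lie in $\overline{\partial f(\HH)}$, a marginally more elementary step.
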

\begin{proof}
    Let us denote by $\eta$ the subgradient curve of $f$ emanating from $\hat{x}$.  
    By \cite[Theorem~17.2.2]{ABM2014-book}, we know that $\dot{\eta}^+(t)=-\partial^\circ f(\eta(t))$ for all $t\geq 0$ and that $\left\|\dot{\eta}^+(t)\right\|$ is nonincreasing. 
    Since $p_f$ is the unique minimizer for the norm over the closed convex set $\overline{\partial f(\mathcal{H})}$, we deduce that $\dot{\eta}^+(t)=-p_f$ for all $t\geq0$.
    Hence, $\eta(t)=\hat{x}-tp_f$ for all $t\geq 0$.
\end{proof}
Let us prove Proposition~\ref{prop:Determination-nonsmooth} for the case where the functions $f$ and $g$ are of class $\mathcal{C}^1$.
\begin{proposition}\label{prop: case C1 p}
    Let $f,g:\mathcal{H}\to\R$ be two $\mathcal{C}^1$-smooth convex functions. Assume 
    \begin{enumerate}
    \item[$(i)$] $\|\nabla f(x)\| = \|\nabla g(x)\|$ for all $x\in \mathcal{H}$;
    \item[$(ii)$] $p_f=p_g$; and
    \item[$(iii)$] $p_f = \nabla f(\hat{x})$ for some $\hat{x}\in \HH$. %there is $x\in \mathcal{H}$ such that $\partial^\circ f(x)=p_f$.
\end{enumerate}
    Then, $f$ and $g$ are equal up to an additive constant.
\end{proposition}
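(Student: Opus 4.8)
The plan is to construct a single \emph{common} gradient curve for $f$ and $g$ and use it as a reference trajectory against which every other gradient curve can be compared; this circumvents the fact that the individual values $f$ and $g$ both diverge to $-\infty$. First I would upgrade hypothesis $(iii)$ to a symmetric statement. Since $\|\nabla g(\hat{x})\| = \|\nabla f(\hat{x})\| = \|p_f\| = \|p_g\|$ by $(i)$ and $(ii)$, and $\|p_g\| = \inf s_g$ is the minimal norm of an element of the closed convex set $\overline{\nabla g(\mathcal{H})}$, the uniqueness of the minimal-norm element forces $\nabla g(\hat{x}) = p_g$. Thus $\nabla f(\hat{x}) = p_f$ and $\nabla g(\hat{x}) = p_g = p_f$. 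Applying Lemma~\ref{lemma:1} to both $f$ and $g$ shows that the line $\eta(t) = \hat{x} - t p_f$ is simultaneously the gradient curve of $f$ and of $g$ emanating from $\hat{x}$; in particular $\nabla f(\eta(t)) = \nabla g(\eta(t)) = p_f$ for all $t\geq 0$, so $(f-g)(\eta(t))$ is constant, equal to $c := f(\hat{x}) - g(\hat{x})$.

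Next, I would fix an arbitrary $x\in\mathcal{H}$ and let $\gamma$ be the gradient curve of $g$ from $x$. A direct computation using $(i)$ and Cauchy--Schwarz gives
\[
\frac{d}{dt}(f-g)(\gamma(t)) = \big\langle \nabla g(\gamma(t)) - \nabla f(\gamma(t)),\, \nabla g(\gamma(t))\big\rangle = \|\nabla g(\gamma(t))\|^2 - \langle \nabla f(\gamma(t)), \nabla g(\gamma(t))\rangle \geq 0,
\]
so $t\mapsto (f-g)(\gamma(t))$ is nondecreasing and $(f-g)(x) \leq (f-g)(\gamma(t))$ for all $t$. The crucial step is to bound this from above as $t\to\infty$. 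Here I would compare $\gamma$ with the reference line $\eta$: nonexpansiveness of the subgradient flow semigroup of $g$ gives $\|\gamma(t) - \eta(t)\| \leq \|x - \hat{x}\| =: R$. Using the convexity inequalities $f(\gamma(t)) - f(\eta(t)) \leq \langle \nabla f(\gamma(t)), \gamma(t) - \eta(t)\rangle$ and $g(\gamma(t)) - g(\eta(t)) \geq \langle \nabla g(\eta(t)), \gamma(t) - \eta(t)\rangle = \langle p_f, \gamma(t) - \eta(t)\rangle$, together with $(f-g)(\eta(t)) = c$, I would obtain
\[
(f-g)(\gamma(t)) - c \leq \big\langle \nabla f(\gamma(t)) - p_f,\, \gamma(t) - \eta(t)\big\rangle \leq \|\nabla f(\gamma(t)) - p_f\|\, R.
\]

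To close the argument I would invoke the asymptotic behavior of the slopes along $\gamma$. By Proposition~\ref{prop: nonincreasing speed}, $\|\nabla g(\gamma(t))\| \to \inf s_g = \|p_g\|$, and by $(i)$ and $(ii)$ this forces $s_f(\gamma(t)) = \|\nabla f(\gamma(t))\| \to \|p_f\| = \inf s_f$; Proposition~\ref{prop: asymptotic descent} then yields $\nabla f(\gamma(t)) \to p_f$. Hence the right-hand side above tends to $0$, so $(f-g)(x) \leq \limsup_{t\to\infty}(f-g)(\gamma(t)) \leq c$. Because the hypotheses are symmetric in $f$ and $g$ (condition $(iii)$ holds for $g$ at the same point $\hat{x}$, by the first paragraph, and $\eta$ remains a common gradient curve), exchanging the roles of $f$ and $g$ and running the same argument along the gradient curve of $f$ gives $(g-f)(x) \leq g(\hat{x}) - f(\hat{x}) = -c$, i.e. $(f-g)(x) \geq c$. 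Combining the two bounds yields $(f-g)(x) = c$ for every $x\in\mathcal{H}$. I expect the main obstacle to be precisely the uniform-in-$t$ comparison of the second paragraph: in the unbounded case one cannot track $f(\gamma(t))$ and $g(\gamma(t))$ separately, and it is essential that $\eta$ be a \emph{common} trajectory so that the diverging $p_f$-contributions cancel, while nonexpansiveness keeps $\gamma$ within bounded distance of it.
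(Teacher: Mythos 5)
Your proof is correct, and while it shares the paper's skeleton (the common line $\eta(t)=\hat{x}-tp$ from Lemma~\ref{lemma:1}, monotonicity of $f-g$ along gradient curves, and convergence of gradients to $p$ via Propositions~\ref{prop: nonincreasing speed} and~\ref{prop: asymptotic descent}), the step that closes the argument is genuinely different. The paper compares $\psi(\eta(t))$ with $\psi(\gamma(t))$ by integrating $\langle(\nabla f-\nabla g)(w_t(\lambda)),\dot w_t(\lambda)\rangle$ along the transversal curve $w_t(\lambda)=S_f(\lambda\hat{x}+(1-\lambda)y,t)$, $\lambda\in[0,1]$, and then passes to the limit with the Lebesgue Dominated Convergence theorem, using nonexpansiveness only to get the Lipschitz bound on $w_t$. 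You instead apply the convex gradient inequalities at the two endpoints: $f(\gamma(t))-f(\eta(t))\leq\langle\nabla f(\gamma(t)),\gamma(t)-\eta(t)\rangle$ and $g(\gamma(t))-g(\eta(t))\geq\langle p,\gamma(t)-\eta(t)\rangle$ (the latter exact because $\nabla g(\eta(t))=p$ identically), so the error term collapses to $\|\nabla f(\gamma(t))-p\|\cdot\|x-\hat{x}\|$, with nonexpansiveness used to bound $\|\gamma(t)-\eta(t)\|$; no integration over a homotopy and no dominated convergence are needed. Your first paragraph also upgrades $(iii)$ to $\nabla g(\hat{x})=p_g$ by uniqueness of the minimal-norm element of $\overline{\nabla g(\HH)}$, which is a slightly more elementary route than the paper's appeal to Proposition~\ref{prop: asymptotic descent} with a constant sequence. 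What your version buys is significant: in the nonsmooth setting the paper must add Lemmas~\ref{lemma: 2} and~\ref{lemma: 3} precisely to make the dominated-convergence step legitimate, whereas your endpoint argument transfers almost verbatim with $\nabla f$ replaced by $\partial^\circ f$ and $p\in\partial g(\eta(t))$, so it would streamline the proof of Proposition~\ref{prop:Determination-nonsmooth} as well; what the paper's homotopy argument buys in exchange is mainly the symmetric two-point chain $\psi(z)\leq\psi(\hat{x})\leq\psi(y)\leq\psi(z)$, which your two one-sided passes (along the $g$-curve and then the $f$-curve from the same point $x$) achieve equally well. One cosmetic point in your favor: you correctly record $\nabla f(\eta(t))=\nabla g(\eta(t))=p$, where the paper's proof has a sign slip ($-p$).
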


\begin{proof}
    Let us denote by $p$ the vector $p_f$ (and $p_g$).
    Proposition~\ref{prop: asymptotic descent} entails that $\nabla g(\hat{x})=p$. 
    By Lemma~\ref{lemma:1}, the curve $\eta:[0,+\infty)\to \mathcal{H}$ defined by 
    \[
    \eta(t)=\hat{x}-tp
    \] 
    is the common gradient flow of $f$ and of $g$ emanating from $\hat{x}$.
    Denote by $\psi:\HH\to\R$ the function 
    \[
    \psi:=f-g.
    \]
    Since $\eta$ is a common gradient curve for the functions $f$ and $g$, Proposition~\ref{prop: nonincreasing speed} entails that $\nabla f(\eta(t)) = \nabla g(\eta(t)) = -p$. Thus, $(\psi\circ\eta)'(t) = 0$ for all $t>0$, yielding
    \begin{equation}\label{eq:Sebastian-Tapia-1}
        \psi(\eta(t))= \psi(\eta(0))=\psi (\hat{x}),\text{ for all }t\geq 0.
    \end{equation}
    
    We shall show that $\psi$ is constant on $\HH$, which is readily equivalent to the statement of the proposition. To this end, let $y,z\in \mathcal{H}$ and assume that $\psi(y)\leq \psi(z)$. 
    Consider the gradient curve $\gamma:[0,+\infty)\to \HH$ of $f$ emanating from $y$ and the gradient curve $\nu:[0,+\infty)\to \HH$ of $g$ emanating from $z$. That is, 
    \begin{align*}
        \begin{cases}
            \gamma(0)&=y\\
            \dot{\gamma}(t)&=-\nabla f(\gamma(t)),
        \end{cases}\quad\text{and}\quad \begin{cases}
            \nu(0)&=z\\
            \dot{\nu}(t)&=-\nabla g(\nu(t)).
        \end{cases}
    \end{align*}
    for every $t\geq 0$.  Notice that $(\psi\circ \gamma)'(t)= -\|\nabla f(\gamma(t))\|^2+\langle \nabla g(\gamma(t)),\nabla f(\gamma(t))\rangle \leq 0$ for all $t>0$. 
    A similar computation shows that $(\psi\circ \nu)'(t)\geq 0$ for all $t\geq 0$. Therefore,
    
    \begin{equation}\label{eq:Sebastian-Tapia-2}
    \psi(\gamma(t))\leq \psi(y)\leq \psi(z)\leq \psi(\nu(t)),~\text{for all }t\geq 0.
    \end{equation}
    
    %Let $S_f:\HH\times[0,+\infty)\to \HH$ be the subgradient flow semigroup of $f$, which is a nonexpansive semigroup.
    For any $t\geq 0$, consider the function
    \[
    w_t(\lambda)=S_f(\lambda \hat{x}+(1-\lambda)y,t),\quad\lambda\in [0,1].
    \]
    Since the gradient flow semigroup $S_f$ is nonexpansive, the functions $w_t:[0,1]\to \HH$ is $d(\hat{x},y)$-Lipschitz for every $t\geq 0$. We deduce

    \begin{align*}
        \psi(\hat{x})-\psi(y)&\stackrel{\eqref{eq:Sebastian-Tapia-2}}{\leq} \psi(\hat{x})-\psi(\gamma(t))= \psi(\eta(t))-\psi(\gamma(t))\\[0.15cm]
        &\,= \psi(w_t(1)) -\psi(w_t(0))\\
        &\stackrel{\eqref{eq:Sebastian-Tapia-1}}{=} \int_0^1\langle (\nabla f-\nabla g)(w_t(\lambda)),\dot{w}_t(\lambda)\rangle d\lambda\\
        &\,\leq d(\hat{x},y)\int_0^1\big(\|\nabla f(w_t(\lambda))-p \| +\|\nabla g(w_t(\lambda))-p\| \big)d\lambda. 
    \end{align*}
    By Proposition~\ref{prop: asymptotic descent}, $\nabla f (w_t(\lambda)) $ converges to $p$ as $t$ goes to infinity.
    On the other hand, since $t\mapsto\|\nabla f(w_t(\lambda))\|$ is decreasing for every $\lambda\in[0,1]$ (see, e.g. \cite[Theorem~17.2.2]{ABM2014-book}), we have 
    \begin{equation}\label{ eq: domination}
    \sup_{t\geq 0,\lambda\in [0,1]}\{\|\nabla f(w_t(\lambda))-p \| +\|\nabla g(w_t(\lambda))-p\|\}\leq
    \underbrace{\max_{u\in [y,\hat{x}]}\{\|\nabla f(u)\|+\|\nabla g(u)\|+2\|p\|\}}_{=K<+\infty}  
    \end{equation}

    Therefore, we can apply Lebesgue Dominated Convergence theorem to get
    \begin{align*}
    \psi(\hat{x})-\psi (y) &\leq \limsup_{t\to+\infty} \psi (\eta(t))-\psi(\gamma(t))\\
    &\leq d(\hat{x},y)\limsup_{t\to+\infty}\int_0^1\big(\|\nabla f(w_t(\lambda))-p \| +\|\nabla g(w_t(\lambda))-p\| \big)d\lambda = 0.
    \end{align*}
    It follows that $\psi(\hat{x})\leq \psi(y)$. In an analogous way, using the gradient curve $\nu$ of $g$ emanating from $z$, we deduce that
    \[
    \psi(z)-\psi(\hat{x})\leq \limsup_{t\to+\infty} \psi (\nu(t))-\psi(\eta(t))\leq  0.
    \]
    It follows that $\psi(z) \leq \psi(y)$ and consequently the equality holds.
    Since $y$ and $z$ are arbitrary vectors in $\HH$, then $\psi= f-g$ is a constant function. The proof is complete.
\end{proof}

Most of the steps of the above proof can be replicated in the nonsmooth case without much difficulty.
However, some extra work is still required to justify a correct application of the Lebesgue Dominated Convergence theorem. 
%For a detailed treatment of the Moreau-Yosida regularization for convex functions, we refer the reader to \cite[Chapter 17]{ABM2014-book}. 

The following lemma is known. We include a short proof for completeness.
\begin{lemma}\label{lemma: 2}
    Let $f:\mathcal{H}\to\R\cup\{+\infty\}$ be a proper lower semicontinuous convex function. 
    Let $\ell:[0,1]\to \mathcal{H}$ be a continuous curve such that $ \ell([0,1])\subset \mathrm{dom}\,\partial f$. 
    Then, $\partial^\circ f\circ \ell$ is measurable. 
\end{lemma}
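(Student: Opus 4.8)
The plan is to show measurability of $\partial^\circ f \circ \ell$ by exhibiting it as a pointwise limit of measurable (indeed continuous) maps, exploiting the Yosida/Moreau regularization of the maximal monotone operator $\partial f$. Recall that for each $\lambda>0$ the resolvent $J_\lambda := (\Id + \lambda \partial f)^{-1}$ is a single-valued, nonexpansive (hence continuous) operator defined on all of $\HH$, and the Yosida approximation $A_\lambda := \tfrac{1}{\lambda}(\Id - J_\lambda)$ is single-valued and Lipschitz continuous on $\HH$. The key classical fact I would invoke is that for every $x \in \dom\,\partial f$ one has $A_\lambda x \to \partial^\circ f(x)$ as $\lambda \searrow 0$, with $\|A_\lambda x\| \le \|\partial^\circ f(x)\|$ (see Brezis, \cite{Brezis1973Operateurs}). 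This convergence is precisely tailored to recover the minimal-norm selection.

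Concretely, first I would fix the curve $\ell$ and note that for each fixed $\lambda > 0$, the composition $A_\lambda \circ \ell : [0,1] \to \HH$ is continuous, as a composition of the Lipschitz map $A_\lambda$ with the continuous curve $\ell$; in particular it is (Borel) measurable. Second, using the hypothesis $\ell([0,1]) \subset \dom\,\partial f$, I would apply the pointwise convergence $A_\lambda(\ell(s)) \to \partial^\circ f(\ell(s))$ as $\lambda \searrow 0$, valid at every $s \in [0,1]$. Third, I would take the limit along a countable sequence $\lambda_n \searrow 0$: the map $s \mapsto \partial^\circ f(\ell(s))$ is then the pointwise limit of the sequence of continuous maps $s \mapsto A_{\lambda_n}(\ell(s))$. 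Since a pointwise limit of measurable functions valued in a (separable, or at least with the strong topology restricted to the separable image) metric space is measurable, the conclusion $\partial^\circ f \circ \ell$ is measurable follows.

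The one point deserving care, and the main potential obstacle, is the measurability of a pointwise limit of $\HH$-valued maps when $\HH$ is not separable. Here I would observe that the image $\ell([0,1])$ is a compact subset of $\HH$, hence separable, and consequently all the values $A_{\lambda_n}(\ell(s))$ and $\partial^\circ f(\ell(s))$ lie in the (closed) separable subspace generated by $\ell([0,1])$ together with the countable family of resolvent images; on a separable metric space the Borel structure is well behaved and the pointwise limit of continuous maps is Borel measurable. Thus strong measurability is genuinely obtained, and no pathology arises from nonseparability of the ambient space. This completes the argument.

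\begin{proof}
For each $\lambda>0$, let $J_\lambda = (\Id+\lambda\partial f)^{-1}$ denote the resolvent of $\partial f$ and let $A_\lambda = \tfrac{1}{\lambda}(\Id - J_\lambda)$ be the associated Yosida approximation. Each $A_\lambda:\HH\to\HH$ is single-valued and Lipschitz continuous, and for every $x\in\dom\,\partial f$ one has $A_\lambda x \to \partial^\circ f(x)$ as $\lambda\searrow 0$ (see \cite{Brezis1973Operateurs}). Fix a sequence $\lambda_n\searrow 0$. For each $n$, the map $s\mapsto A_{\lambda_n}(\ell(s))$ is continuous on $[0,1]$, being the composition of the continuous curve $\ell$ with the continuous operator $A_{\lambda_n}$. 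Since $\ell([0,1])\subset\dom\,\partial f$, the pointwise limit
\[
\partial^\circ f(\ell(s)) = \lim_{n\to\infty} A_{\lambda_n}(\ell(s))
\]
holds for every $s\in[0,1]$. As $\ell([0,1])$ is compact, all the values involved lie in a separable subspace of $\HH$, so the pointwise limit of the continuous (hence Borel measurable) maps $s\mapsto A_{\lambda_n}(\ell(s))$ is Borel measurable. Therefore $\partial^\circ f\circ\ell$ is measurable.
\end{proof}
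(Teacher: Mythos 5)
Your proof is correct and follows essentially the same route as the paper: the paper uses the gradient $\nabla f_\lambda$ of the Moreau--Yosida regularization, which coincides with your Yosida approximation $A_\lambda = \tfrac{1}{\lambda}(\Id - J_\lambda)$ of $\partial f$, and both arguments conclude by writing $\partial^\circ f\circ\ell$ as a pointwise limit of continuous curves. Your extra care about separability (needed for strong measurability in a possibly nonseparable $\HH$) is a welcome detail that the paper's proof leaves implicit.
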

\begin{proof}
    This is a direct application of the Moreau-Yosida regularization. 
    Indeed, let $\lambda>0$ and $f_\lambda:\mathcal{H}\to \R$ be the Moreau-Yosida regularization of $f$. 
    Then, thanks to \cite[Proposition 17.2.2]{ABM2014-book}, $f_\lambda$ is a $\mathcal{C}^{1,1}$-smooth convex function such that 
    \[\lim_{\lambda\to 0} \nabla f_\lambda (x)= \partial^\circ f(x),~\text{for all }x\in \mathrm{dom}\,\partial f.\]
    Therefore, $\partial^\circ f\circ \ell$ is the pointwise limit of the sequence of continuous curves $\left\{\nabla f_{\frac{1}{n}}\circ \ell\right\}_{n\geq 1}$.
    \end{proof}
    Following the notation of the proof of Proposition~\ref{prop: case C1 p}, we derive the following lemma.
\begin{lemma}\label{lemma: 3}
Let $f:\mathcal{H}\to\R\cup\{+\infty\}$ be a proper lower semicontinuous convex function. Let $x,y\in \mathrm{dom}\,f$ and consider, for any $t\geq 0$ and $\lambda\in[0,1]$, $w_t(\lambda)=S_f(\lambda x+(1-\lambda)y,t)$. 
%Then, there is $t>0$ such that 
%\[\int_0^1 \|\partial^\circ f(w_t(\lambda))\|^2 d\lambda <+\infty,\]
Then,
\[\lim_{t\to+\infty} \sup\{\|\partial^\circ f(w_t(\lambda))\|:~\lambda\in [0,1]\}=\|p\|.\]
\end{lemma}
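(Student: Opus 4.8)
The plan is to reduce the claim to two one-sided estimates for $g_t(\lambda) := \|\partial^\circ f(w_t(\lambda))\|$, where I write $p := p_f$ and note that for $t>0$ the regularizing property of the subgradient flow guarantees $w_t(\lambda)\in\dom\partial f$, so that $g_t(\lambda)$ is well defined and finite (here $\lambda x+(1-\lambda)y\in\overline{\dom f}$ by convexity of the domain). The lower estimate is immediate: since $\partial^\circ f(w_t(\lambda))\in\partial f(\HH)$, the characterization \eqref{eq:Inf-sf=norm-pf} gives $g_t(\lambda)\ge\inf s_f=\|p\|$ for every $t>0$ and $\lambda\in[0,1]$, whence $\sup_{\lambda}g_t(\lambda)\ge\|p\|$ and $\liminf_{t\to\infty}\sup_\lambda g_t(\lambda)\ge\|p\|$.

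For each fixed $\lambda$, Proposition~\ref{prop: asymptotic descent} together with Proposition~\ref{prop: nonincreasing speed} already yields $g_t(\lambda)\to\|p\|$ monotonically; the whole difficulty is to make this convergence \emph{uniform} in $\lambda$. A soft Dini-type argument on the compact interval $[0,1]$ is obstructed here, because $\lambda\mapsto g_t(\lambda)=s_f(w_t(\lambda))$ is only lower semicontinuous (the slope $s_f$ is l.s.c.\ but not continuous), i.e.\ exactly the wrong direction of semicontinuity for the decreasing Dini theorem. I would therefore bypass Dini and obtain uniformity quantitatively, through the classical regularizing estimate for subgradient flows: for every $v\in\dom\partial f$, every $u_0\in\overline{\dom f}$ and every $t>0$,
\[
\|\partial^\circ f(S_f(u_0,t))\|\ \le\ \|\partial^\circ f(v)\|+\frac{\|u_0-v\|}{t}
\]
(see \cite[Th\'{e}or\`{e}me~3.1]{Brezis1973Operateurs}).

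To exploit it, fix $\varepsilon>0$ and use $\inf\{\|\partial^\circ f(v)\|:v\in\dom\partial f\}=\inf s_f=\|p\|$ (again \eqref{eq:Inf-sf=norm-pf}) to pick $v\in\dom\partial f$ with $\|\partial^\circ f(v)\|\le\|p\|+\varepsilon$. Writing $u_0^\lambda:=\lambda x+(1-\lambda)y$ and applying the estimate to each $w_t(\lambda)=S_f(u_0^\lambda,t)$, convexity of the norm gives $\|u_0^\lambda-v\|\le\max\{\|x-v\|,\|y-v\|\}=:M$, uniformly in $\lambda$, so that
\[
g_t(\lambda)\ \le\ \|p\|+\varepsilon+\frac{M}{t}\qquad\text{for all }\lambda\in[0,1],\ t>0.
\]
Taking the supremum over $\lambda$ and then $\limsup_{t\to\infty}$ yields $\limsup_{t\to\infty}\sup_\lambda g_t(\lambda)\le\|p\|+\varepsilon$, and letting $\varepsilon\downarrow0$ gives the matching upper bound. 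Combined with the lower bound of the first paragraph, this proves $\lim_{t\to\infty}\sup_\lambda g_t(\lambda)=\|p\|$.

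The only genuinely nontrivial ingredient is the regularizing estimate; granting it, the argument is elementary. The point worth flagging is the interplay of the convexity of the segment $[y,x]$ (through the uniform bound $M$) and of the identity $\inf s_f=\|p\|$, which together convert the pointwise, $\lambda$-dependent decay into a single $O(1/t)$ bound valid for all $\lambda$ simultaneously.
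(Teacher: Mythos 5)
Your proof is correct, and it takes a genuinely different route from the paper's. The paper argues by contradiction, reusing the technique of Proposition~\ref{prop: nonincreasing speed}: assuming $\sup_{\lambda\in[0,1]}\|\partial^\circ f(w_t(\lambda))\|>\|p\|+\sigma$ for all $t$, it picks a witness $\lambda_t$ for each $t$, propagates the bound backward in time via the monotonicity of $s\mapsto\|\partial^\circ f(w_s(\lambda_t))\|$, and then combines the energy identity $f(w_t(\lambda))=f(w_0(\lambda))-\int_0^t\|\partial^\circ f(w_s(\lambda))\|^2ds$ with the subgradient inequality and nonexpansiveness to bound $\int_0^t\big(\|\partial^\circ f(w_s(0))\|^2-\|\partial^\circ f(w_s(\lambda_t))\|^2\big)ds$ from below by a constant independent of $t$; this is impossible, since $\|\partial^\circ f(w_s(0))\|^2\to\|p\|^2$ while $\|\partial^\circ f(w_s(\lambda_t))\|^2>(\|p\|+\sigma)^2$ for $s\in(0,t)$, so the integral tends to $-\infty$. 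You bypass the contradiction entirely by invoking Brezis's smoothing estimate $\|\partial^\circ f(S_f(u_0,t))\|\le\|\partial^\circ f(v)\|+\|u_0-v\|/t$ for $v\in\dom\partial f$, combined with $\inf s_f=\|p\|$ from~\eqref{eq:Inf-sf=norm-pf}; your lower bound and your observation that a Dini-type argument is blocked (the slope has the wrong direction of semicontinuity) are both correct. Your route buys two things the paper's does not: an explicit rate (once $v$ is chosen with $\|\partial^\circ f(v)\|\le\|p\|+\varepsilon$, the supremum is at most $\|p\|+\varepsilon+M/t$), and more generality, since the segment structure plays no role --- only the boundedness of the set of initial points in $\overline{\dom f}$ matters, whereas the paper's argument genuinely uses $x,y\in\dom f$ through the bound $f(w_0(\lambda_t))\le\max\{f(x),f(y)\}$. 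What the paper's route buys in exchange is self-containedness: it relies only on the elementary facts already employed in the proof of Proposition~\ref{prop: nonincreasing speed} (energy identity, subgradient inequality, nonexpansiveness), without importing the sharper regularizing estimate. One point to fix: in \cite{Brezis1973Operateurs} that estimate belongs to the theorem on the regularizing effect of subdifferential semigroups (Th\'{e}or\`{e}me~3.2 of Chapter~III), not Th\'{e}or\`{e}me~3.1, which is the existence theorem for general maximal monotone generators with initial datum in $D(A)$ and contains no such smoothing statement; verify and correct the reference number.
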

\begin{proof}
    Since $f$ is a convex function and $[x,y]\subset \mathrm{dom}\,f$, \cite[Theorem 17.2.3]{ABM2014-book} implies that, for any $t>0$, $w_t([0,1])\subset \mathrm{dom}\,\partial f$.
    Recall also that for every $t\geq 0$ the function $w_t$ is $d(x,y)$-Lipschitz and that the map $t\mapsto \|\partial^\circ f(w_t(\lambda))\|$ is nonincreasing for every $\lambda\in [0,1]$. Finally, recall that
    \begin{equation}\label{eq:Miguel-Goberna}
     f(w_t(\lambda))=f(w_0(\lambda))-\int_0^t\|\partial^\circ f(w_s(\lambda))\|^2ds.
    \end{equation}

    Reasoning towards a contradiction, assume that there is $\sigma>0$ such that
    \[
    \sup\{\|\partial^\circ f(w_t(\lambda))\|:~\lambda\in [0,1]\}>\|p\|+\sigma,\quad\text{for all }t>0.
    \]
    Then, for every $t>0$, fix $\lambda_t\in [0,1]$ such that $\|\partial^\circ f(w_t(\lambda_t))\|>\|p\|+\sigma$. 
    Consequently, ${\|\partial^\circ f(w_s(\lambda_t))\|>\|p\|+\sigma}$ for every $s\in(0,t)$. 
    Applying the inequality of the convex subdifferential, we obtain
    \begin{align*}
         f(w_t(\lambda_t))
        &\geq f(w_t(0))+\langle \partial^\circ f(w_t(0)),w_{t}(\lambda_n)-w_t(0)\rangle\\
        & \geq f(w_0(0))-\int_0^t\|\partial^\circ f(w_s(0))\|^2ds- \|\partial^\circ f(w_t(0))\| \|x-y\|.
    \end{align*}

    The above expression together with~\eqref{eq:Miguel-Goberna} for $\lambda = \lambda_t$ yields
    \begin{align*}
        \int_0^t\Big(\|\partial^\circ f(w_s(0))\|^2-\|\partial^\circ f(w_s(\lambda_t))\|^2\Big)ds \geq f(w_0(0))-f(w_0(\lambda_t))-\|\partial^\circ f(w_t(0))\| \|x-y\|.
    \end{align*}
    Thus, for every $t>1$ we obtain
    \begin{align}\label{eq: bound}
        \int_0^t \Big( \|\partial^\circ f(w_s(0))\|^2-\|\partial^\circ f(w_s(\lambda_t))\|^2 \Big)ds \geq f(w_0(0))-\max\{f(x),f(y)\}-\|\partial^\circ f(w_1(0))\| \|x-y\|.
    \end{align}
    Notice that the right-hand side of~\eqref{eq: bound} is a finite number.
    This easily leads to a contradiction since the map $t\mapsto \|\partial^\circ f(w_s(0))\|$ converges to $\|p\|$ and $\|\partial^\circ f(w_s(\lambda_t))\|>\|p\|+\sigma$ for all $s\in[0,t]$. The proof is complete.
\end{proof}

Now we are ready to prove Proposition~\ref{prop:Determination-nonsmooth}
\begin{proof}[Proof of Proposition~\ref{prop:Determination-nonsmooth}]
We follow the same lines of proof of Proposition~\ref{prop: case C1 p}, and we present only  a sketch of the proof highlighting the main differences.\\

Set $\Omega:=\mathrm{dom}\, \partial f= \mathrm{dom}\,
\partial g\subset \HH$, and
define the function $\psi:\Omega\to \R$ by
\[
\psi:=f-g.
\]
Note that $\Omega\subset\mathrm{dom}\,f\cap \mathrm{dom}\,g$.
Let $y,z\in \Omega$. Without loss of generality $\psi(y)\leq \psi(z)$.
As in the proof of Proposition~\ref{prop: case C1 p}, let $p$ be the common value of $p_f$ and $p_g$, and let $\hat{x}\in \Omega$ such that $p\in \partial f(\hat{x})$.  Let $\eta(t) = \hat{x}-tp$ the common gradient curve of $f$ and $g$ emmanating from $\hat{x}$. Define $\gamma$ and $\nu$ as follows:
    \begin{align*}
        \begin{cases}
            \gamma(0)&=y\\
            \dot{\gamma}(t)&\in-\partial f(\gamma(t)),
        \end{cases}\quad\text{and}\quad \begin{cases}
            \nu(0)&=z\\
            \dot{\nu}(t)&\in-\partial g(\nu(t)),
        \end{cases}
    \end{align*}
 for almost every $t\geq 0$.
%Denote by $S_f:\Omega\times [0,+\infty)\to \mathcal{H}$ the subgradient flow semigroup of $f$.
Thanks to \cite[Theorem 17.2.2]{ABM2014-book}, we have
\[S_f([y,\hat{x}]\times(0,+\infty))\subset \Omega.\] 
For $t\geq0$, consider the function

\[
w_t(\lambda):= S_f(\lambda \hat{x}+(1-\lambda)y,t),\quad \lambda\in[0,1].
\] 
That is, $w_t(0)=\gamma(t)$ and $w_t(1)=\eta(t)$.
Thanks to Lemma~\ref{lemma: 3}, there is $T>0$ such that $\sup\{\|\partial^\circ f(w_t(\lambda))\|:~\lambda\in[0,1]\}<+\infty$ for all $t\geq T$.
Now, thanks to Lemma~\ref{lemma: 2}, we deduce
\[ 
\int_0^1 \big(\|\partial^\circ f(w_t(\lambda))\|+\|\partial^\circ g(w_t(\lambda))\|\big) d\lambda<+\infty,~\text{ for all }t\geq T.
\]

Recalling that $w_t\in \mathcal{W}^{1,2}([0,1],\mathcal{H})$ ($w_t$ is $d(\hat{x},y)$-Lipschitz), 
 we can apply the chain rule for convex functions \cite[Proposition 17.2.5]{ABM2014-book} to obtain 

\begin{align*}
\psi(\hat{x})- \psi(y) \leq \psi (\eta(t))-\psi(\gamma(t))&=\int_0^1 \langle \partial^\circ f(w_t(\lambda)) -\partial ^\circ g(w_t(\lambda)),\dot{w}_t(\lambda)\rangle d\lambda\\
&\leq  d(\hat{x},y)\int_0^1 \big(\|\partial^\circ f(w_t(\lambda)) - p\| +\|\partial^\circ g(w_t(\lambda)) - p\|\big)d\lambda.
\end{align*}

Recall now that, for every $\lambda\in[0,1]$, $\partial^\circ f(w_t(\lambda))$ and $\partial^\circ g(w_t(\lambda))$ converge to $p$ as $t$ goes to $+\infty$.
Since $t\geq T$, we can use Lebesgue Dominated Convergence theorem (using as majorizing function the function 
\(\lambda\in[0,1]\mapsto 2\|\partial^\circ f(w_T(\lambda))\|+2\|p\|\)), 
to deduce that
\begin{align*}
    \psi(\hat{x})-\psi(y)&\leq \limsup_{t\to+\infty} \psi (\eta(t))-\psi(\gamma(t))\leq 0.
\end{align*}
In a similar manner, using the subgradient flow semigroup of $g$, we can show that ${\psi(z)-\psi(\hat{x})\geq 0}$, leading to the equality $\psi(z) = \psi(y)$. It follows that $\psi$ is constant on $\Omega$.\smallskip\newline
Since $f$ is equal to $g$ up to a constant on $\Omega=\mathrm{dom}\,\partial f=\mathrm{dom}\,\partial g$, it readily follows that $f$ is equal to $g$ up to a constant on $\mathcal{H}$ due convexity and lower semicontinuity of both functions.
The proof is now complete. 
\end{proof}

\textbf{Open problems:} It is not known if~\eqref{eq:Conjecture-DeterminationNeumann} holds in full generality. In the smooth case, the main technique is to show that $\|\nabla f - \nabla g\|$ is nonincreasing along the gradient curve $\gamma$. For the case where the Crandall-Pazy direction is attained, the control is given by bounding the distance between the subgradient flows of $f$ and $g$, using the common subgradient curve $\eta(t) = x-tp$, and the contraction property of the subgradient flows. We conjecture that~\eqref{eq:Conjecture-DeterminationNeumann} holds for general lower semicontinuous convex functions. For this case, a new technique is needed to show that subgradient flows of $f$ and $g$ cannot evolute in different directions.\\ 

\textbf{Acknowledgments} A major part of this work has been conducted during research visits of A.~Daniilidis and S.~Tapia-Garc\'{i}a at Universidad de O'Higgins (January 2024, January 2025), and of D.~Salas at TU Wien (November 2024). The first author has been supported by the  FWF grant P36344 (Austria). The second author has been supported by the BASAL grant FB210005 and the FONDECYT grant 1251159 (Chile). The second author acknowledges the support of the CNRS through the \textit{Poste Rouge} program, that financed a 3-month visit to the IMT, Université Toulouse Paul Sabatier, CNRS, INSA Toulouse, UT, INU, UT Capitole, UT2J. \smallskip\newline %; 118 route de Narbonne 31062 TOULOUSE CEDEX 9 - France.\\  
This research was funded in whole or in part by the Austrian Science Fund (FWF) [10.55776/P36344]. For open access purposes, the first author has applied a CC BY public copyright license to any author-accepted manuscript version arising from this submission.

\bibliographystyle{plain}
\bibliography{Unbounded}

	\newpage
	
	\rule{5 cm}{0.5 mm} \bigskip\newline\noindent Aris DANIILIDIS, Sebasti{\'a}n TAPIA-GARC{\'I}A
	
	\medskip
	
	\noindent Institute of Statistics and Mathematical Methods in Economics,
	E105-04 \newline TU Wien, Wiedner Hauptstra{\ss}e 8, A-1040
	Wien\smallskip\newline\noindent E-mail: \{\texttt{aris.daniilidis,
		sebastian.tapia.garcia\}@tuwien.ac.at}\newline\noindent
	\texttt{https://www.arisdaniilidis.at/}\newline
	\texttt{https://sites.google.com/view/sebastian-tapia-garcia}
	
	\medskip
	
	\noindent Research supported by the Austrian Science Fund grant \textsc{FWF
		P-36344N}.\newline\vspace{0.2cm}
	
	\noindent David SALAS
	
	\medskip
	
	\noindent Instituto de Ciencias de la Ingenier\'{i}a, Universidad de
	O'Higgins\newline Av. Libertador Bernardo O'Higgins 611, Rancagua, Chile
	\smallskip
	
	\noindent E-mail: \texttt{david.salas@uoh.cl} \newline\noindent
	\texttt{http://davidsalasvidela.cl} \medskip
	
	\noindent Research supported by the grant: \smallskip\newline CMM 
	FB210005 BASAL, FONDECYT 1251159 (Chile)
	
\end{document}